\crefname{hypothesis}{Hypothesis}{Hypotheses}
\title{Equations with infinite delay:\\ pseudospectral discretization for numerical stability and bifurcation in an abstract framework%
\thanks{%
\textbf{Funding:}
The work of RS is supported by the Italian Ministry of University and Research (MUR) through the PRIN 2020 project (No. 2020JLWP23) ``Integrated Mathematical Approaches to Socio–Epidemiological Dynamics'', Unit of Udine (CUP: G25F22000430006).
}}
\author{Francesca Scarabel\thanks{Department of Applied Mathematics, University of Leeds, Woodhouse, Leeds LS2 9JT, UK (\email{f.scarabel@leeds.ac.uk}).}
\and Rossana Vermiglio\thanks{Department of Mathematics and Computer Science, University of Udine, via delle Scienze 208, 33100 Udine, Italy (\email{rossana.vermiglio@uniud.it}).}
}
\newcommand{\diff}{\mathop{}\!\mathrm{d}}
\begin{document}

\maketitle
\begin{abstract} 
We consider nonlinear delay differential and renewal equations with infinite delay.
We extend the work of Gyllenberg \emph{et al}, \emph{Appl.\,Math.\,Comput.}\,(2018) by introducing a unifying abstract framework, and derive a finite-dimensional approximating system via pseudospectral discretization.
For renewal equations, we consider a reformulation in the space of absolutely continuous functions via integration.
We prove the one-to-one correspondence of equilibria between the original equation and its approximation, and that linearization and discretization commute. 
Our most important result is the proof of convergence of the characteristic roots of the pseudospectral approximation of the linear(ized) equations when the collocation nodes are chosen as the family of scaled zeros or extrema of Laguerre polynomials. 
This ensures that the finite-dimensional system correctly reproduces the stability properties of the original linear equation if the dimension of the approximation is large enough.
The result is illustrated with several numerical tests, which also demonstrate the effectiveness of the approach for the bifurcation analysis of equilibria of nonlinear equations.
The new approach used to prove convergence also provides the exact location of the spectrum of the differentiation matrices for the Laguerre zeros and extrema, adding new insights into properties that are important in the numerical solution of differential equations by pseudospectral methods.

\end{abstract}

\begin{keywords}
  Renewal equations,
  delay differential equations,
  spectral collocation,
  exponentially weighted interpolation,
  abstract differential equation,
  linear stability,
  differentiation matrix.
\end{keywords}

\begin{AMS}
34K99, 37M20, 65J99, 65L07, 65P30, 65R20, 92D25
\end{AMS}

\section{Introduction}\label{S_intro}
According to a standard definition, ``a delay equation is a rule for extending a function of time towards the future on the basis of the (assumed to be) known past'' \cite{DV21twin}.
Delay equations (DEs) include renewal equations (REs), which specify the value of the unknown function in terms of its past values, delay differential equations (DDEs), which specify the derivative, and systems coupling both \cite{DGG07,DiekmannBook,DV21twin}.

In this paper we focus on equations with infinite delay (iDEs, including iDDEs and iREs), which are widely used in mathematical biology, and in particular in ecology and epidemiology, to describe physiologically structured population models, in which the individual rates are assumed to depend on a structuring variable (e.g., age, size, time since infection) that evolves in time \cite{DGM18,DGM20,DGMND10,MetzDiekmannBook,SPOW21}.
In behavioral epidemic models, the infinite delay can arise to incorporate the effect of information and waning memory \cite{Donofrio2022, Manfredi2013}.
Other applications of iDDEs can be found in \cite{LiuMagal2020} and the references therein.

Our interest focuses on the long-term behavior of the systems, and in particular on the existence and stability of equilibria and their bifurcation analysis (i.e., how these properties change under parameter variation).
Differently from ordinary differential equations (ODEs), DEs generate infinite-dimensional dynamical systems where the state space is a suitable function space, hence substantially increasing the complexity of their theoretical and numerical analysis. 
One way to overcome theoretically the infinite dimensionality of the problem is via reduction to finite-dimensional inertial or center manifolds \cite{Foias1988,HaleBook}. 
However in this paper we take a different approach, aiming at studying \emph{numerically} the infinite-dimensional system by reducing it to a finite-dimensional system of ODEs, whose properties can be studied with well-established software, see e.g.~\cite{BDGSV16} and references therein. While in some special cases this reduction can be done exactly, e.g., via the linear chain trick 
\cite{CGHD21,DGM18,DGM20}, for more general kernels one usually relies on approximations. 

Among the various techniques available for this reduction \cite{BoydBook,GLZ06,ShenBook}, pseudospectral discretization (PSD) has been successfully applied in the last decade both to linear(ized) problems for the stability analysis of equilibria, and to nonlinear ones for the numerical bifurcation in various contexts, including DEs with bounded delay and PDEs with nonlocal boundary conditions \cite{ADLS22,BDR22,BDGSV16,BDMV13,BMVBook,DSV20,DSV2024,DeWolff21,SBDGV21,SDV21}. 
For DEs, the main advantage of using PSD compared to other approximation techniques is the fast (exponential) order of convergence for the stability of the equilibria, which follows from the fact that the eigenfunctions of the infinitesimal generator of the linear(ized) equation are exponential, hence analytic. 

To treat the unbounded delay, several approaches are used in the literature, including domain truncation or suitable change of variables \cite{BoydBook,GLZ06,ShenBook}, or even quadrature rules to approximate an iDDE with a DDE with a finite number of point delays to be studied via established methods (e.g.~\cite{BDGSV16}) or software like DDE-BIFTOOL \cite{ddebiftool0}. 
However, these approaches do not exploit some of the fundamental features of iDEs, in particular the ``exponentially fading memory'', which is the major advantage of the PSD proposed here. 
In fact, the step from finite to infinite delay involves several complexities, both from a theoretical and numerical point of view.

From a theoretical point of view, for iDEs the principle of linearized stability (PLS) has been proved in a space of exponentially weighted functions defined on $\mathbb{R}_- := (-\infty,0]$, with weight 
\begin{equation}\label{weight}
w(\theta):=\mathrm{e}^{\rho \theta}, \quad \theta \in \mathbb{R}_-,
\end{equation}
for some $\rho>0$.  
Using a modification of the sun-star calculus, Diekmann and Gyllenberg show that the stability of equilibria of a nonlinear equation is determined by the part of the spectrum of the infinitesimal generator of the linearized semigroup in the right-half plane 
\begin{equation*}
\mathbb{C}_\rho:=\{ \lambda \in \mathbb{C} \ | \  \Re \lambda  > -\rho\},
\end{equation*}
which contains eigenvalues only, that are roots of a characteristic equation \cite[Theorem 3.15 and Theorem 4.7]{DG12}. 
Recently, the PLS and the Hopf bifurcation theorem have been proved for iDDEs by applying the integrated semigroup theory in \cite{LiuMagal2020}. We also refer to \cite{S87} for a Hopf result in a space of bounded uniformly continuous functions. However, in this paper we keep reference to \cite{DG12} as it covers iDDEs, iREs and coupled iDDEs and iREs.
Finally, we note that other weight functions can be considered \cite{JSW87}, but here we restrict to the exponential weight \eqref{weight} for which the PLS has been rigorously proved. 

In this theoretical framework and motivated by the accuracy, effectiveness and flexibility shown by the PSD approach for bounded delay, a detailed experimental analysis has been carried out using collocation on the scaled Laguerre nodes in \cite{GSV18} for iDDEs.

Here, we introduce a unifying abstract differential equation (ADE) able to capture iDDEs and iREs within the same framework. 
This pragmatic representation allows us to encompass all the fundamental concepts of iDEs and the general derivation of the finite-dimensional dynamical system through the weighted version of the PSD all at once. 
For iREs, we also propose a new approach inspired by \cite{DV21twin,SDV21}: via integration, we map the original iRE, defined in a space of weighted integrable functions, to an equivalent equation defined in a subspace of absolutely continuous functions on any bounded subinterval of $\mathbb{R}_{-}$.
The main advantages are twofold: point evaluation, as well as polynomial interpolation and collocation, are well defined in the new space, and the domain of the infinitesimal generator is defined by a trivial condition, simplifying the implementation.


Most importantly, here we add the key element to the numerical investigations in \cite{GSV18}: the rigorous proof that the PSD accurately approximates the stability properties of the equilibria. To this aim, we first show that there is a one-to-one correspondence between the equilibria of the ADE and its PSD. 
Then, we focus on the linear(ized) problems and prove the convergence of the stability indicators for the zero equilibrium, i.e., the eigenvalues of the infinitesimal generator in $\mathbb{C}_\rho$. 
Due to the complexities introduced by the step from finite to infinite delay and in the absence of suitable interpolation results on the semi-unbounded interval, the proof follows a slightly different route compared to the standard ones for finite delay \cite{BMVBook,SDV21}. 
The new approach guarantees the convergence of the approximated eigenvalues by using as collocation nodes the zeros or extrema of standard Laguerre polynomials suitably scaled.
Moreover, the general results for the Laguerre polynomials presented in the appendix provide an exact description of the location of the eigenvalues of the differentiation matrix with boundary condition built on the scaled Laguerre zeros and extrema.

The paper is organized as follows. 
In \cref{S_ADDE} we recall the basic results of stability theory for iDEs and introduce their abstract formulation. 
The unifying abstract framework is introduced in \cref{S_ADE}, together with the PSD approach. In \cref{S_convergence}, after deriving the characteristic equations, we show the one-to-one correspondence of equilibria of the original ADE and its finite-dimensional approximation, and prove the convergence of the characteristic roots. To improve readability, the results used in the proofs are collected in \cref{S_appendix} for the zeros and extrema of the standard Laguerre polynomials in $\mathbb{R}_+$. 
Finally, \cref{S_Results} illustrates by numerical experiments the theoretical convergence of the eigenvalues for linear iDEs and the flexibility of the approach for numerical bifurcation analysis of equilibria of nonlinear iDEs. 

\section{Abstract formulation of nonlinear iDEs}\label{S_ADDE}
In this section we recall from \cite{DG12} some basic results for iDDEs and iREs, including the PLS for the stability of equilibria. In view of \cref{S_ADE}, we also show how both iDDEs and iREs (and, by combining the two approaches, systems coupling both) can be formally written as semilinear ADEs.
Although we typically consider real-valued nonlinear equations and real-valued solutions, to apply spectral theory we assume that real Banach spaces are embedded into complex ones and all operators are extended to complex spaces. For further details on the complexification process we refer to \cite[Chapter III.7]{DiekmannBook}.

Let $\rho>0$ and consider the weight function \eqref{weight}. 
For $d \in \mathbb{N}$, $d>0$, we denote with $|\cdot|$ a norm in $\mathbb{C}^d$. 
Let $L^\infty_{\rho}:=L^\infty_{\rho}(\mathbb{R}_{-};\mathbb{C}^{d})$ be the space of (the equivalence classes of) functions $\varphi$ such that $|w\varphi|$ is essentially bounded in $\mathbb{R}_-$, and $L^p_{\rho}:=L^p_{\rho}(\mathbb{R}_{-};\mathbb{C}^{d})$, $1 \leq p < +\infty$, that of $\varphi$ such that $w|\varphi|^p$ is integrable, equipped with the norm 
$$
\|\varphi\|_{p,\rho}:= \|w\varphi\|_p =
\begin{cases}
\left( \int_{-\infty}^0 w(\theta) |\varphi(\theta)|^p \diff \theta \right)^{\frac{1}{p}}, & 1 \leq p < +\infty, \\[5pt]
{\rm ess} \sup_{\theta \in \mathbb{R}_{-}} w(\theta)|\varphi(\theta)|, & p=\infty. 
\end{cases}
$$
Note that $w L^p_{\rho} \cong L^p$, but we will often write $w$ explicitly to keep reference to the original state space of the dynamical system. 

To ease notation, we write $w\psi(\theta)$ in place of $w(\theta)\psi(\theta)$.  
We consider column vectors and use the convention of writing them as rows, but separating each column block-vector elements with semicolons.
We will also often abuse notation by using a symbol to denote both a constant function and the value it takes on.

\subsection{Delay differential equations}\label{ss_DDE}
A $d$-dimensional nonlinear iDDE is
\begin{equation}\label{DDE}
\frac{\diff y}{\diff t} (t) = F(y_t), \quad t\geq0,
\end{equation}
where $y_t(\theta):=y(t+\theta)$, $\theta \in \mathbb{R}_{-}$, is the ``history function'' at time $t\geq0.$ 
We assume that $F \colon \mathcal{C}_{\rho} \to \mathbb{R}^d$ is a continuously Fr\'echet differentiable map, where $\mathcal{C}_{\rho}:=\mathcal{C}_{0,\rho}(\mathbb{R}_{-};\mathbb{R}^{d})$ is the Banach space of all functions $\psi \colon \mathbb{R}_{-} \to \mathbb{R}^d$ such that $w\psi$ is continuous and vanishes at $-\infty$, equipped with the norm $\|\cdot\|_{\infty,\rho}$. 

To represent \eqref{DDE} in the abstract framework, we consider the space 
$$X:=w\mathbb{C}^d \times wL_\rho^\infty $$ 
equipped with the norm $\|(w\alpha;w\psi)\|_X=\max\{|\alpha|, \|w\psi\|_{\infty}\}.$ 
By defining the embedding $j\colon \mathcal{C}_{\rho} \to X$ as 
$j\psi:=(w\psi(0);w\psi)$, we identify $\mathcal{C}_{\rho}$ with the subspace $Y:=j(\mathcal{C}_{\rho})$ of $X$.
Note that, by interpreting $\alpha \in \mathbb{C}^d$ as a constant function in $L_\rho^\infty,$ we have that $X \subset w L_\rho^\infty(\mathbb{R}_-,\mathbb{C}^{\bar{d}})$ with $\bar{d}=2d$. 

By defining $u(t):=jy_t,$ it is easy to verify that $u$ satisfies the semilinear ADE
\begin{equation}\label{ADDE}
\frac{\diff u}{\diff t} (t)=\mathcal{A}_0^{\text{DDE}}u(t)+\mathcal{F}^{\text{DDE}}(u(t)), \quad t \geq 0,
\end{equation}
where the linear unbounded operator $\mathcal{A}_0^{\text{DDE}}\colon D(\mathcal{A}_0^{\text{DDE}}) \to X$ is
\begin{equation}\label{A0DDE}
\begin{array}{l}
\mathcal{A}_0^{\text{DDE}}(w\alpha;w\psi)=\frac{\diff}{\diff\theta}(w\alpha;w\psi)-\rho(w\alpha;w\psi)=(0;w \frac{\diff \psi }{\diff \theta}),\\[8pt]
D(\mathcal{A}_0^{\text{DDE}})=\{ (w\alpha;w\psi) \mid \psi \text { is Lipschitz continuous, } \psi(0)=\alpha \}, 
\end{array}
\end{equation}
and the nonlinear operator $\mathcal{F}^{\text{DDE}}\colon Y \to X$ is 
\begin{equation}\label{FDDE}
\mathcal{F}^{\text{DDE}}(w\alpha;w\psi)=(wF(j^{-1}(w\alpha,w\psi));0)=(wF(\psi);0).
\end{equation}
Here, $\mathcal{A}_0^{\text{DDE}}$ captures the ``shift'' of the solution, $\mathcal{F}^{\text{DDE}}$ captures the ``extension'' according to the rule defined by \eqref{DDE}, while both account for ``weight multiplication''. Note that the $\theta$-derivative in \eqref{A0DDE} should be interpreted as almost everywhere.

Now we consider the nonlinear operator $\mathcal{F}^{\text{DDE}}.$  The canonical basis $e_1,\dots,e_d$ of $ \mathbb{R}^d$ determines $d$ linearly independent vectors $\xi_i=(e_i;0)$, $i=1,\dots,d,$ and $\Xi_d:= \text{span}\{ w\xi_i \mid i=1,...,d \} \subset X.$ By defining $\mathcal{F}_i^{\text{DDE}}(w\alpha;w\psi)= F_i(\psi)$, we get the finite-rank representation 
$\mathcal{F}^{\text{DDE}}=w\sum_{i=1}^d \mathcal{F}_i^{\text{DDE}}\xi_i$. Note that $\mathcal{F}^{\text{DDE}}$ inherits the regularity properties of $F$: if $F$ is continuously Fr\'echet differentiable, so is $\mathcal{F}^{\text{DDE}}$. 

We are interested in the stability properties of an equilibrium $\bar{\psi}$ of \eqref{DDE}, which is a constant solution $\bar{\psi}$ such that $F(\bar{\psi})=0$. The linearization of \eqref{DDE} at $\bar{\psi}$ is
\begin{equation}\label{LDDE}
\frac{\diff y}{\diff t}(t) = DF(\bar{\psi})y_t,
\quad t\geq 0,
\end{equation}
where $DF(\bar{\psi})$ is the Fr\'echet derivative of $F$ at $\bar{\psi}$.

From \eqref{A0DDE}--\eqref{FDDE} we get that $\bar{u}=(w\bar{\psi};w\bar{\psi})$ is an equilibrium of \eqref{ADDE} and vice versa. Moreover by introducing the linear operator $\mathcal{L}^{\text{DDE}}(w\alpha;w\psi)=(wDF(\bar{\psi})\psi;0)$, the abstract formulation of \eqref{LDDE} is 
\begin{equation}\label{LADDE}
\frac{\diff u}{\diff t}(t)=\mathcal{A}_0^{\text{DDE}} u(t)+\mathcal{L}^{\text{DDE}} u(t), \quad t \geq 0.
\end{equation}
By the PLS \cite[Theorem 4.7]{DG12}, the stability of $\bar{\psi}$ is determined by the stability of the zero solution of \eqref{LDDE} as follows. 
\begin{theorem} \label{Th_PLSDDE}
Let $\bar{\psi}$ be an equilibrium of \eqref{DDE} and $\bar{u}=(w\bar{\psi};w\bar{\psi})$ the corresponding equilibrium of \eqref{ADDE}. Then 
\begin{itemize}[leftmargin=*]
\item if all the eigenvalues of $\mathcal{A}^{\text{DDE}}:=\mathcal{A}_0^{\text{DDE}}+\mathcal{L}^{\text{DDE}}$ in $\mathbb{C}_\rho$ have negative real part, then $\bar{u}$ is exponentially stable;
\item if at least one eigenvalue of $\mathcal{A}^{\text{DDE}}$ has positive real part, then $\bar{u}$ is unstable;
\item the eigenvalues of $\mathcal{A}^{\text{DDE}}$ in $\mathbb{C}_\rho$ correspond to roots of the characteristic equation
\begin{equation*}
\det (\lambda I_{d}-DF(\bar{\psi})\mathrm{e}^{\lambda \cdot})=0
\end{equation*}
in $\mathbb{C}_\rho$, which are at most finitely many.
\end{itemize}
\end{theorem}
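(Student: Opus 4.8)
The plan is to separate the concrete spectral characterization (the third bullet) from the stability dichotomy (the first two bullets), and to prove the characterization first, since it identifies the spectrum that the other two items refer to.

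First I would compute the eigenvalues of $\mathcal{A}^{\text{DDE}}=\mathcal{A}_0^{\text{DDE}}+\mathcal{L}^{\text{DDE}}$ directly. Combining \eqref{A0DDE} with $\mathcal{L}^{\text{DDE}}(w\alpha;w\psi)=(wDF(\bar{\psi})\psi;0)$ gives $\mathcal{A}^{\text{DDE}}(w\alpha;w\psi)=(wDF(\bar{\psi})\psi;w\psi')$, so for $u=(w\alpha;w\psi)\in D(\mathcal{A}^{\text{DDE}})$, $u\neq0$, the eigenvalue equation $\mathcal{A}^{\text{DDE}}u=\lambda u$ splits into $\psi'=\lambda\psi$ (second component) and $DF(\bar{\psi})\psi=\lambda\alpha$ (first component). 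The scalar ODE, together with the domain condition $\psi(0)=\alpha$ in \eqref{A0DDE}, forces $\psi(\theta)=\mathrm{e}^{\lambda\theta}\alpha$; membership in the state space requires $w\psi=\mathrm{e}^{(\rho+\lambda)\theta}\alpha$ to be bounded and to vanish at $-\infty$, which holds precisely when $\Re\lambda>-\rho$, i.e.\ $\lambda\in\mathbb{C}_\rho$. Substituting $\psi=\mathrm{e}^{\lambda\cdot}\alpha$ into the first component yields $(\lambda I_d-DF(\bar{\psi})\mathrm{e}^{\lambda\cdot})\alpha=0$, and since $u\neq0$ forces $\alpha\neq0$, a nonzero eigenvector exists if and only if $\det(\lambda I_d-DF(\bar{\psi})\mathrm{e}^{\lambda\cdot})=0$. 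This establishes the correspondence in the third bullet.

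For the finiteness, I would use that $DF(\bar{\psi})$ is a bounded linear map on $\mathcal{C}_\rho$ and that $\|\mathrm{e}^{\lambda\cdot}\|_{\infty,\rho}=\sup_{\theta\leq0}\mathrm{e}^{(\rho+\Re\lambda)\theta}=1$ for every $\lambda\in\mathbb{C}_\rho$, so the matrix $M(\lambda):=DF(\bar{\psi})\mathrm{e}^{\lambda\cdot}$ stays uniformly bounded on $\mathbb{C}_\rho$. Since $\lambda\mapsto\mathrm{e}^{\lambda\cdot}$ is holomorphic into $\mathcal{C}_\rho$ on $\mathbb{C}_\rho$, the characteristic function $h(\lambda):=\det(\lambda I_d-M(\lambda))$ is analytic and not identically zero, hence its zeros are isolated; moreover, once $|\lambda|$ exceeds the (uniform) bound on $M(\lambda)$ the diagonal term dominates and $h(\lambda)\neq0$, so all roots lie in a bounded subset of $\mathbb{C}_\rho$. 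To exclude accumulation at the boundary $\Re\lambda=-\rho$ I would appeal to the semigroup structure described below, which places the essential spectrum in $\{\Re\lambda\leq-\rho\}$ and thus makes $\sigma(\mathcal{A}^{\text{DDE}})\cap\mathbb{C}_\rho$ isolated point spectrum of finite total multiplicity.

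Finally, the stability dichotomy is the analytic heart and requires semigroup theory. The strategy is to show that $\mathcal{A}_0^{\text{DDE}}$ generates a $C_0$-semigroup whose (essential) growth bound is at most $-\rho$ — this is exactly where the exponential weight \eqref{weight} enters, since translating into the past contracts the weighted norm like $\mathrm{e}^{-\rho t}$ — and then to note that $\mathcal{L}^{\text{DDE}}$ is a finite-rank, hence compact, perturbation, so that the essential spectrum of $\mathcal{A}^{\text{DDE}}$ remains in $\{\Re\lambda\leq-\rho\}$ and $\sigma(\mathcal{A}^{\text{DDE}})\cap\mathbb{C}_\rho$ reduces to finitely many eigenvalues of finite multiplicity. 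A spectral-mapping argument applied to this point-spectrum part above the essential spectrum then ties the sign of the rightmost eigenvalue to the semigroup growth bound, giving exponential stability of $\bar{u}$ when all eigenvalues in $\mathbb{C}_\rho$ have negative real part and instability when one has positive real part. I expect this last step to be the main obstacle: the classical spectral mapping theorem and the standard sun--star calculus do not apply verbatim on the exponentially weighted, non-sun-reflexive spaces arising for infinite delay, so making the growth-bound/spectral-bound correspondence rigorous requires the modified sun--star calculus of \cite{DG12}. Accordingly I would defer to \cite[Theorem 4.7]{DG12} for those estimates while using the explicit computation above to pin down the relevant spectrum and its characteristic equation.
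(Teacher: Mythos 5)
Your proposal is correct, and it is consistent with the paper, which in fact offers no proof of this theorem at all: the result is simply recalled from \cite[Theorem 4.7]{DG12} (together with Theorem 3.15 there), exactly the reference you ultimately defer to for the stability dichotomy and the finiteness/essential-spectrum estimates. Your explicit computation of the eigenvalue equation --- splitting $\mathcal{A}^{\text{DDE}}u=\lambda u$ into $\psi'=\lambda\psi$ with $\psi(0)=\alpha$ and $DF(\bar\psi)\psi=\lambda\alpha$, hence $\psi=\mathrm{e}^{\lambda\cdot}\alpha$ and the determinant condition --- is extra but fully consistent detail; the paper only records its conclusion implicitly, in the sentence after the theorem giving the eigenfunction form $(w\alpha;w\psi_\lambda)$ with $\psi_\lambda(\theta)=\mathrm{e}^{\lambda\theta}\alpha$.
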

Given an eigenvalue $\lambda\in \mathbb{C}_\rho$ of $\mathcal{A}^{\text{DDE}}$, its eigenfunction is of the form $(w\alpha;w\psi_\lambda)$, where $\psi_\lambda(\theta) := \mathrm{e}^{\lambda\theta}\,\alpha$, for $\alpha \in \mathbb{C}^{d}\setminus \{0\}$.

\subsection{Renewal equations}\label{ss_RE}
Consider now the $d$-dimensional nonlinear iRE 
\begin{equation}\label{RE} 
y(t) = F(y_t), \quad t>0,
\end{equation}
for $y_t \in L^1_{\rho}$, where $F\colon L^1_{\rho} \to \mathbb{R}^{d}$ is a continuously Fr\'echet differentiable map.
In particular, this assumption means that we only consider equations \eqref{RE} such that their linearization is a linear Volterra integral equations, ruling out neutral REs with point delays. 

To derive the abstract equation, we define the space $X:=wL^1_{\rho}$ and the embedding $j:L^1_{\rho} \to X$ given by
$j\varphi (\theta) := w \int_0^\theta \varphi(s) \diff s$, for $\theta \in \mathbb{R}_{-}$.
Note that 
$$\|j\varphi\|_X \leq \int_{-\infty}^0 \mathrm{e}^{\rho \theta} \int_\theta^0 |\varphi(s)| \diff s \diff \theta =\int_{-\infty}^0  \int_{-\infty}^s \mathrm{e}^{\rho \theta} \diff \theta \ |\varphi(s)| \diff s \leq \frac{\|w\varphi\|_1}{\rho},
$$
and $Y:=jL^1_{\rho}=\{w\varphi \in X \mid w\varphi=j \frac{\diff \varphi }{\diff \theta} \text{ and } w \frac{\diff \varphi }{\diff \theta} \in X \}$ coincides with the subspace $AC_0:=AC_0(\mathbb{R}_{-};\mathbb{C}^{d})$ of all absolutely continuous functions $w\varphi \in X$ in any closed subset of $\mathbb{R}_-$, such that $w\varphi(0)=0.$ Note that, for $w\varphi \in Y$, $j^{-1}(w\varphi)= \frac{\diff \varphi }{\diff \theta}$. \smallskip

By defining the state $u(t):=jy_t$, we can formally derive the ADE
\begin{equation}\label{ARE}
\frac{\diff u}{\diff t}(t)=\mathcal{A}_0^{\text{RE}}u(t)+\mathcal{F}^{\text{RE}}(u(t)), \quad t \geq 0,
\end{equation}
where
\begin{align}
&\mathcal{A}_0^{\text{RE}}(w\varphi)= \textstyle{\frac{\diff}{\diff \theta}}(w\varphi) -\rho w\varphi=w \textstyle{\frac{\diff \varphi }{\diff \theta}}, & & w\varphi \in D(\mathcal{A}_0^{\text{RE}})=Y,\label{A0RE} \\[8pt]
&\mathcal{F}^{\text{RE}}(w\varphi)=-wF(j^{-1}w\varphi)=-wF( \textstyle{\frac{\diff \varphi }{\diff \theta}}), & & w\varphi \in Y. \label{FRE}
\end{align}
Similarly as before, $\mathcal{F}^{\text{RE}}$ has finite rank with range $\Xi_d=w\mathbb{R}^d.$ 

As expected, there is a one-to-one correspondence between the equilibria $\bar{\varphi}$ of \eqref{RE}, which are constant functions such that $\bar{\varphi}=F(\bar{\varphi}),$ and the equilibria $\bar{u}=j\bar{\varphi}$ of \eqref{ARE}. Moreover, by defining $\mathcal{L}^{\text{RE}}:=-wDF(\bar{\varphi})j^{-1},$ the abstract formulation of the linearized RE 
$y(t) = DF(\bar{\varphi})y_t$, $t>0$, gives the linearized equation
\begin{equation}\label{LARE}
\frac{\diff u}{\diff t}(t) =\mathcal{A}_0^{\text{RE}} u(t)+\mathcal{L}^{\text{RE}}u(t), \quad t \geq 0.
\end{equation}
The following PLS holds \cite[Theorem 3.9]{DG12}.
\begin{theorem} \label{Th_PLSRE}
Let $\bar{\varphi}$ be an equilibrium of \eqref{RE} and $\bar{u}=j\bar{\varphi}$ the corresponding equilibrium of \eqref{FRE}. Then 
\begin{itemize}[leftmargin=*]
\item if all the eigenvalues of $\mathcal{A}^{\text{RE}}:=\mathcal{A}_0^{\text{RE}}+\mathcal{L}^{\text{RE}}$ in $\mathbb{C}_\rho$ have negative real part, then $\bar{u}$ is exponentially stable;
\item if at least one eigenvalue of $\mathcal{A}^{\text{RE}}$ has positive real part, then $\bar{u}$ is unstable;
\item the eigenvalues of $\mathcal{A}^{\text{RE}}$ in $\mathbb{C}_\rho$ correspond to zeros of characteristic equation
\begin{equation*}
\det ( I_{d}-DF(\bar{\varphi})\mathrm{e}^{\lambda \cdot})=0
\end{equation*}
in $\mathbb{C}_\rho,$ which are at most finitely many.
\end{itemize}
\end{theorem}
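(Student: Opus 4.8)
The plan is to separate the statement into its dynamical content (the first two bullets) and its explicitly computable spectral content (the third bullet). For the first two bullets I would not reprove the principle of linearized stability from scratch but invoke the sun-star framework of \cite[Theorem 3.9]{DG12}: once \eqref{LARE} is recognised as the linearisation of \eqref{ARE} about $\bar{u}=j\bar{\varphi}$, with $\mathcal{A}_0^{\text{RE}}$ generating the weighted, shifted translation semigroup on $X=wL^1_\rho$ and $\mathcal{L}^{\text{RE}}=-wDF(\bar\varphi)j^{-1}$ a finite-rank perturbation acting into $\Xi_d=w\mathbb{C}^d$, that theory guarantees that $\mathcal{A}^{\text{RE}}$ generates a strongly continuous semigroup whose growth is governed by the spectrum of $\mathcal{A}^{\text{RE}}$, that this spectrum intersected with $\mathbb{C}_\rho$ consists only of isolated eigenvalues of finite multiplicity (the remainder lying in $\{\Re\lambda\le-\rho\}$), and hence that $\bar u$ is exponentially stable when all such eigenvalues have negative real part while a single eigenvalue with positive real part forces instability.

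For the third bullet I would compute the point spectrum of $\mathcal{A}^{\text{RE}}$ in $\mathbb{C}_\rho$ by hand. Writing an eigenfunction as $w\varphi\in Y=D(\mathcal{A}_0^{\text{RE}})$ and using \eqref{A0RE} together with $\mathcal{L}^{\text{RE}}=-wDF(\bar\varphi)j^{-1}$ and $j^{-1}(w\varphi)=\varphi'$, the eigenvalue relation $\mathcal{A}^{\text{RE}}(w\varphi)=\lambda\,w\varphi$ becomes, after cancelling the nowhere-vanishing weight $w$, the linear inhomogeneous ODE
\begin{equation*}
\varphi'(\theta)=\lambda\,\varphi(\theta)+c,\qquad c:=DF(\bar\varphi)\varphi'\in\mathbb{C}^d,
\end{equation*}
subject to the boundary condition $\varphi(0)=0$ inherited from $Y=AC_0$. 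Solving (for $\lambda\neq0$, the case $\lambda=0$ following by continuity) gives $\varphi(\theta)=\tfrac{c}{\lambda}(\mathrm{e}^{\lambda\theta}-1)$, hence $\varphi'(\theta)=c\,\mathrm{e}^{\lambda\theta}$; equivalently, in the original state space the eigenfunction of \eqref{RE} is $y_\lambda(\theta)=\mathrm{e}^{\lambda\theta}c$. Note that $\mathrm{e}^{\lambda\cdot}\in L^1_\rho$ precisely when $\Re\lambda>-\rho$, which is exactly why the relevant eigenfunctions, and therefore the roots, are confined to $\mathbb{C}_\rho$.

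It then remains to close the self-consistency of $c$. Substituting $\varphi'=c\,\mathrm{e}^{\lambda\cdot}$ into its defining identity $c=DF(\bar\varphi)\varphi'$ and using linearity of the bounded functional $DF(\bar\varphi)$ yields $c=DF(\bar\varphi)[\mathrm{e}^{\lambda\cdot}]\,c$, that is $(I_d-DF(\bar\varphi)\mathrm{e}^{\lambda\cdot})c=0$. A nonzero eigenfunction forces $c\neq0$, so $\lambda\in\mathbb{C}_\rho$ is an eigenvalue of $\mathcal{A}^{\text{RE}}$ if and only if $\det(I_d-DF(\bar\varphi)\mathrm{e}^{\lambda\cdot})=0$, which is the asserted characteristic equation; finiteness of the roots in $\mathbb{C}_\rho$ would follow from the analyticity of $\lambda\mapsto\det(I_d-DF(\bar\varphi)\mathrm{e}^{\lambda\cdot})$ on $\mathbb{C}_\rho$, the decay $DF(\bar\varphi)\mathrm{e}^{\lambda\cdot}\to0$ as $\Re\lambda\to+\infty$, and the no-accumulation property supplied by \cite[Theorem 3.9]{DG12}.

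I expect the genuine obstacle to lie entirely in the first paragraph rather than in this explicit computation. Because $\mathcal{L}^{\text{RE}}=-wDF(\bar\varphi)j^{-1}$ involves $j^{-1}$, which is differentiation and hence \emph{unbounded} on $X$, one cannot treat it as a bounded perturbation of $\mathcal{A}_0^{\text{RE}}$, and the verification that the perturbed spectrum in $\mathbb{C}_\rho$ reduces to finitely many eigenvalues and controls the stability of $\bar u$ is precisely the delicate content of the sun-star calculus of \cite{DG12}. Accordingly, I would invoke that machinery rather than redevelop it, reserving the self-contained work for the eigenvalue/characteristic-equation identification carried out above.
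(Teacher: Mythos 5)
Your proposal is correct and follows essentially the same route as the paper, which states this theorem as a direct consequence of \cite[Theorem 3.9]{DG12} without further proof. Your additional self-contained derivation of the characteristic equation and the eigenfunctions $\varphi_\lambda(\theta)=\frac{\mathrm{e}^{\lambda\theta}-1}{\lambda}\,\alpha$ with $j^{-1}(w\varphi_\lambda)=\mathrm{e}^{\lambda\cdot}\alpha$ matches exactly the remark the paper places after the theorem and is the RE instance (with $\mathcal{B}=0$) of the general characteristic-equation proposition proved later in \cref{S_convergence}.
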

Given an eigenvalue $\lambda \in \mathbb{C}_\rho$ of $\mathcal{A}^{\text{RE}}$, the corresponding eigenfunction is $w\varphi_\lambda$, where $\varphi_\lambda(\theta) := \frac{\mathrm{e}^{\lambda\cdot}-1}{\lambda} \alpha$, for $\alpha \in \mathbb{C}^{d}\setminus \{0\}$. Note also that $j^{-1}(w\varphi_\lambda) = \mathrm{e}^{\lambda \cdot} \alpha$.

\section{A unifying abstract framework}\label{S_ADE}
We now propose a unifying abstract framework which is particularly useful to derive the approximating finite-dimensional system via PSD and to analyse the convergence in \cref{S_convergence}.
We consider a general framework where $X$ is a Banach space of $\mathbb{C}^{\bar{d}}$-valued functions defined on $\mathbb{R}_{-}$ equipped with a non-weighted $p$-norm, $1 \leq p \leq +\infty,$ denoted by $\|\cdot\|_X.$ In other words, $X$ is a closed subspace of $L^p(\mathbb{R}_-;\mathbb{C}^{\bar{d}}).$  

As before, let $\rho>0$ and $w$ be the function \eqref{weight}. Consider the semilinear ADE
\begin{equation}\label{ADE}
\frac{\diff u}{\diff t}(t)=\mathcal{A}_0 u(t)+\mathcal{F}(u(t)), \quad t \geq 0,
\end{equation}
where $\mathcal{A}_0\colon D(\mathcal{A}_0) \to X$ is the linear unbounded operator 
\begin{equation}\label{A0}
\begin{array}{l}
\mathcal{A}_0 x= \frac{\diff x}{\diff \theta}-\rho x=  w \frac{\diff }{\diff \theta} (\frac{x}{w}), \quad x \in D(\mathcal{A}_0), \\[8pt]
D(\mathcal{A}_0)=\{ x \in  X \mid \mathcal{A}_0x \in X \text{ and } x(0)=\mathcal{B}x \},
\end{array}
\end{equation}
with $\mathcal{B}\colon X \to \mathbb{R}^{\bar{d}}$ linear and bounded. 

\begin{remark}\label{remB}
The operators $\mathcal{A}_0^{\text{DDE}}$ and $\mathcal{A}_0^{\text{RE}}$ defined in \eqref{A0DDE} and \eqref{A0RE} are particular instances of \eqref{A0} with $\mathcal{B}(w\alpha,w\psi)=(\alpha;\alpha)$ and $\mathcal{B}(w\varphi)=0$, respectively.  
In particular, the form of $\mathcal{B}$ characterizes the type of iDE under investigation. 
\end{remark}

Note that $D(\mathcal{A}_0)$ is a subset of the Sobolev space $W^{p,1}(\mathbb{R}_-;\mathbb{C}^{\bar{d}}),$ hence it admits one (and only one) continuous representative \mbox{\cite{BrezisBook}}. 
We assume that $\mathcal{F}\colon D(\mathcal{A}_0) \to X$ is continuously Fr\'echet differentiable and admits the finite-rank representation 
\begin{equation}\label{F-finite-rank}
\mathcal{F}(x)=w\sum_{i=1}^d \mathcal{F}_i(x)\xi_i \in \Xi_d
\end{equation}
for some $\mathcal{F}_i \colon D(\mathcal{A}_0) \to \mathbb{R}$ sufficiently smooth, $\xi_i $, $i=1,\dots,d$, linearly independent vectors in $\mathbb{C}^{\bar{d}}$, and $\Xi_d:= w\, \text{span}\{ \xi_i \mid i=1,...,d \}.$ 
The continuity of the elements of $D(\mathcal{A}_0)$, where we assume that $u(t)$ lives for $t \geq 0$, justifies the choice of the PSD approach, which requires point evaluation to be well defined.

\subsection{Reduction to finite dimension via PSD}\label{ss_APSD}

Given a positive integer $N$, called \emph{discretization index}, we consider a set of $N$ distinct points in $\mathbb{R}_-$,
\begin{equation*} 
\Theta_N:=\{ \theta_{1},\dots,\theta_{N} \mid \theta_{N}<\dots <\theta_{1} < 0\}.
\end{equation*}
We refer to the set $\Theta_N$ as the \emph{collocation nodes}.
Then, we represent every $x \in D(\mathcal{A}_0)$ by a set of vectors $X_j = x(\theta_j)$, $j=1,\dots,N$ via the {\it restriction operator} 
$$\mathcal{R}_Nx=(X_{1};\dots;X_{N}) \in \mathbb{C}^{N\bar{d}}.$$

The operator $\mathcal{R}_N$ is complemented with a {\it prolongation operator} $\mathcal{P}_{N,0}$ that maps $\mathbb{R}^{N\bar{d}}$ to the space $D(\mathcal{A}_0) \cap \Pi_{N}^{\bar{d}}$, where $\Pi_{N}^{\bar{d}}$ is the space of $\mathbb{C}^{\bar{d}}$-valued polynomials of degree at most $N$.
$\mathcal{P}_{N,0}$ is defined such that $x_{N,0}=\mathcal{P}_{N,0}(X_1;\dots;X_N) $ satisfies the implicit condition $x_{N,0}(0)=\mathcal{B}x_{N,0}$. 
In practice, we first define the extended mesh 
\begin{equation*}
\Theta_{N,0}:=\Theta_{N} \cup \{\theta_0=0\}
\end{equation*} 
and consider the $N+1$ Lagrange polynomials $\ell_{i,0}$, $i=0,\dots,N$, associated with $\Theta_{N,0}$.
Then, we take
$$ x_{N,0} = \beta \,\ell_{0,0} + \sum_{i=1}^N X_i \ell_{i,0}, $$
with $\beta$ implicitly defined by $x_{N,0}(0) = \mathcal{B} x_{N,0}$. 
With these choices, 
$\mathcal{A}_0\mathcal{P}_{N,0}$ maps into $w\Pi_{N-1}^{\bar{d}}$ and the application of $\mathcal{R}_N$ is well defined. 

Using $\mathcal{R}_N$ and $\mathcal{P}_{N,0}$ we can derive the discrete versions of $\mathcal{A}_{0}$ and $\mathcal{F}$, namely  
\begin{equation*}
\mathcal{A}_{0,N}:=\mathcal{R}_N\mathcal{A}_{0}\mathcal{P}_{N,0}, \quad
\mathcal{F}_{N}:=\mathcal{R}_N\mathcal{F} \circ \mathcal{P}_{N,0},
\end{equation*}
where $\circ$ denotes composition of nonlinear operators. 
Finally, the PSD of \eqref{ADE} is the semilinear ODE
\begin{equation}\label{ODE}
\frac{\diff U}{\diff t} (t)=\mathcal{A}_{0,N} U(t)+\mathcal{F}_N(U(t)), \quad t \geq 0,
\end{equation}
for $U(t) \in \mathbb{C}^{N\bar{d}}$.
In the next section, we study how the equilibria and stability indicators of \eqref{ADE} are approximated by the counterparts of \eqref{ODE}.

\section{Approximation of equilibria and their stability}\label{S_convergence}
An equilibrium $\bar{u} \in D(\mathcal{A}_0)$ of \eqref{ADE} is a solution of
$ \mathcal{A}_0\bar{u}+\mathcal{F}(\bar{u})=0$, 
and, for $\mathcal{L}:=D\mathcal{F}(\bar{u})$, the linearization of \eqref{ADE} at $\bar{u}$ is 
\begin{equation}\label{LADE}
\frac{\diff u}{\diff t}(t)=\mathcal{A}_0 u(t)+\mathcal{L}u(t), \quad t \geq 0.
\end{equation} 
The linearized operator $\mathcal{L}$ inherits the finite-rank property and can be written as 
\begin{equation*}
\mathcal{L}x=w \sum_{i=1}^d (\mathcal{L}_ix) \xi_i, \quad  \text{for all } x\in D(\mathcal{A}_0),
\end{equation*}
for $\mathcal{L}_i \colon D(\mathcal{A}_0) \to \mathbb{C}$, $i=1,\dots,d$. We define 
$\mathcal{L}^\xi \colon D(\mathcal{A}_0) \to \mathbb{C}^d$ by $\mathcal{L}^\xi :=(\mathcal{L}_1;\dots;\mathcal{L}_d).$%

Inspired by the PLS for iDEs we assume the following throughout. 
\begin{hypothesis}\label{PLS}
The local stability of an equilibrium $\bar{u}$ of the semilinear equation \eqref{ADE} can be inferred from the stability of the zero solution of the linearized equation \eqref{LADE}, and 
the position of the eigenvalues of $\mathcal{A}_0+\mathcal{L}$ in $\mathbb{C}_{\rho}$ determines the stability of the zero solution of \eqref{LADE}.
\end{hypothesis}

\begin{proposition}
Let $\rho = \rho_1+\epsilon$ with $\rho_1,\epsilon>0$. Then, $\lambda \in \mathbb{C}_{\rho_1}$ is an eigenvalue of $\mathcal{A}_0 + \mathcal{L}$ if and only if $\lambda$ is a root of the characteristic equation
\begin{equation}\label{CE}
\det(I_{\bar{d}+d}-A(\lambda))=0,
\end{equation}
where $A(\lambda): \mathbb{C}^{\bar{d}+d} \to \mathbb{C}^{\bar{d}+d}$ is the linear operator
\begin{equation*}
A(\lambda)(\beta;\gamma)= \left(\mathcal{B} x(\cdot; \lambda,\beta,\gamma); \ 
\mathcal{L}^\xi x(\cdot; \lambda,\beta,\gamma) \right),
\end{equation*}
and $x(\cdot; \lambda,\beta,\gamma)$ is the solution of the initial value problem
\begin{equation}\label{IVP_gamma}
\begin{cases}
\displaystyle{\frac{\diff x}{\diff \theta}}=(\lambda +\rho)x+w \sum_{i=1}^d \gamma_i \xi_i, \\[5pt]
x(0)=\beta.
\end{cases}
\end{equation}
with $\beta \in \mathbb{C}^{\bar{d}}$ and $\gamma \in \mathbb{C}^d$. In this case, $x(\cdot; \lambda,\beta,\gamma) \in D(\mathcal{A}_0)$ is the eigenfunction corresponding to $\lambda$. 
\end{proposition}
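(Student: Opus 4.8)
The plan is to collapse the infinite-dimensional eigenvalue problem $(\mathcal{A}_0+\mathcal{L})x=\lambda x$ for \eqref{LADE} into a finite-dimensional fixed-point equation for the coefficient pair $(\beta;\gamma)\in\mathbb{C}^{\bar d+d}$ by solving the defining ODE explicitly. First I would integrate the linear initial value problem \eqref{IVP_gamma}: by variation of constants, and using $w(\theta)=\mathrm{e}^{\rho\theta}$, its solution is
\begin{equation*}
x(\theta;\lambda,\beta,\gamma)=w(\theta)\Big[\mathrm{e}^{\lambda\theta}\beta+\frac{\mathrm{e}^{\lambda\theta}-1}{\lambda}\sum_{i=1}^d\gamma_i\xi_i\Big],\qquad \theta\in\mathbb{R}_-.
\end{equation*}
The structural point to record is that this solution depends \emph{linearly} on $(\beta;\gamma)$, so that $A(\lambda)$, being the composition of the linear map $(\beta;\gamma)\mapsto x(\cdot;\lambda,\beta,\gamma)$ with the bounded linear maps $\mathcal{B}$ and $\mathcal{L}^\xi$, is a genuine linear operator on the finite-dimensional space $\mathbb{C}^{\bar d+d}$; this is what eventually licenses the passage to a determinant.

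Next I would verify that $x(\cdot;\lambda,\beta,\gamma)$ belongs to $D(\mathcal{A}_0)$ whenever $\lambda\in\mathbb{C}_{\rho_1}$. Differentiating \eqref{IVP_gamma} gives $\mathcal{A}_0 x=\frac{\diff x}{\diff\theta}-\rho x=\lambda x+w\sum_i\gamma_i\xi_i$, where $\lambda x$ and the finite-rank term $w\sum_i\gamma_i\xi_i\in\Xi_d$ both lie in $X$, so it remains only to check $x\in X$. Here the choice $\rho=\rho_1+\epsilon$ with $\epsilon>0$ enters: it forces $\Re\lambda>-\rho_1>-\rho$, so the weighted modes behave like $\mathrm{e}^{(\rho+\Re\lambda)\theta}$ as $\theta\to-\infty$ with strictly positive exponent $\rho+\Re\lambda$, securing $x\in L^p$ for every $1\le p\le+\infty$ (and, on the subspace where the solution actually evolves, the required vanishing at $-\infty$). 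Consequently $x\in D(\mathcal{A}_0)$ precisely when the boundary condition $x(0)=\mathcal{B}x$ in \eqref{A0} holds, i.e.\ when $\beta=\mathcal{B}x(\cdot;\lambda,\beta,\gamma)$, which is the first component of the fixed-point relation. I would then rewrite the eigenvalue equation itself: substituting $\mathcal{A}_0 x=\lambda x+w\sum_i\gamma_i\xi_i$ and the finite-rank form $\mathcal{L}x=w\sum_i(\mathcal{L}_i x)\xi_i$, the identity $(\mathcal{A}_0+\mathcal{L})x=\lambda x$ reduces to $w\sum_i\big(\gamma_i+\mathcal{L}_i x\big)\xi_i=0$; since the $\xi_i$ are linearly independent and $w\neq0$, this is equivalent to pinning down the forcing coefficients $\gamma$ through $\mathcal{L}^\xi x$, which is exactly the second component of the fixed-point equation $(\beta;\gamma)=A(\lambda)(\beta;\gamma)$.

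Finally I would assemble the two self-consistency relations into the single equation $(\beta;\gamma)=A(\lambda)(\beta;\gamma)$ and invoke finite-dimensionality: a nonzero solution exists if and only if $1$ is an eigenvalue of the linear operator $A(\lambda)$ on $\mathbb{C}^{\bar d+d}$, that is, if and only if $\ker\big(I_{\bar d+d}-A(\lambda)\big)\neq\{0\}$, which is precisely \eqref{CE}. To close the equivalence I would record that the construction is injective in the sense $x\equiv0\iff(\beta;\gamma)=0$: indeed $x(0)=\beta$ forces $\beta=0$, after which $x(\theta)=w(\theta)\frac{\mathrm{e}^{\lambda\theta}-1}{\lambda}\sum_i\gamma_i\xi_i\equiv0$ forces $\sum_i\gamma_i\xi_i=0$ and hence $\gamma=0$ by linear independence; thus a nontrivial eigenfunction corresponds to a nontrivial fixed point and vice versa, and in either direction $x(\cdot;\lambda,\beta,\gamma)$ is the eigenfunction. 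I expect the main obstacle to be the membership step rather than the algebra: one must argue carefully that the explicitly constructed exponential-type function lands in the abstract \emph{closed subspace} $X\subseteq L^p$ and in $D(\mathcal{A}_0)$, uniformly in the norm index $p$, and identify exactly why the strict inequality $\Re\lambda>-\rho_1$ afforded by $\epsilon>0$ is what guarantees the necessary decay.
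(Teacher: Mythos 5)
Your route is the paper's route: solve \eqref{IVP_gamma} by variation of constants, note that the solution depends linearly on $(\beta;\gamma)$, and collapse the eigenvalue problem to the finite-dimensional fixed-point equation $(\beta;\gamma)=A(\lambda)(\beta;\gamma)$, hence to \eqref{CE}. The extra care you take over membership of $x(\cdot;\lambda,\beta,\gamma)$ in $D(\mathcal{A}_0)$ (via $\rho+\Re\lambda>\rho-\rho_1=\epsilon>0$) and over the injectivity $x\equiv 0\iff(\beta;\gamma)=0$ goes beyond what the paper writes down and is welcome.

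There is, however, one step you should not have waved through. Your own (correct) algebra gives: if $x$ solves \eqref{IVP_gamma}, then $\mathcal{A}_0x=\lambda x+w\sum_i\gamma_i\xi_i$, so $(\mathcal{A}_0+\mathcal{L})x=\lambda x$ is equivalent to $\gamma_i+\mathcal{L}_ix=0$ for all $i$, i.e.\ to $\gamma=-\mathcal{L}^\xi x$. You then declare this to be ``exactly the second component of the fixed-point equation'', but the second component of $(\beta;\gamma)=A(\lambda)(\beta;\gamma)$ as stated is $\gamma=+\mathcal{L}^\xi x$, and the determinants $\det(I_{\bar{d}+d}-A(\lambda))$ obtained with $\pm\mathcal{L}^\xi$ in the second block do not have the same roots. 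A sanity check against the RE case shows the minus sign is the right one: there $\mathcal{B}=0$ forces $\beta=0$, so $x=w\,\frac{\mathrm{e}^{\lambda\cdot}-1}{\lambda}\gamma$ and $\mathcal{L}^\xi x=-DF(\bar{\varphi})\mathrm{e}^{\lambda\cdot}\gamma$; the condition $\gamma=-\mathcal{L}^\xi x$ reproduces $\det(I_{d}-DF(\bar{\varphi})\mathrm{e}^{\lambda\cdot})=0$ of \cref{Th_PLSRE}, whereas $\gamma=+\mathcal{L}^\xi x$ yields the wrong sign. So the mismatch is a sign slip in the statement itself (the second block of $A(\lambda)$ should be $-\mathcal{L}^\xi x(\cdot;\lambda,\beta,\gamma)$, or equivalently the forcing in \eqref{IVP_gamma} should carry a minus sign), and the paper's own one-line proof carries the same slip. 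But as written, your proof asserts an identification that your immediately preceding formula contradicts, and that assertion is where the entire content of the equivalence sits: you should either correct the sign or explicitly flag the discrepancy rather than absorb it into the phrase ``pinning down the forcing coefficients''.
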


\begin{proof}
Let $x = x(\cdot;\lambda,\beta,\gamma)$ be the solution of \eqref{IVP_gamma}. 
By variation of constants,
\begin{equation}\label{eigsol}
x(\cdot;\lambda,\beta,\gamma)=e(\lambda+\rho)\beta + h(\lambda+\rho)\gamma,
\end{equation}
where $e(\lambda+\rho)\colon \mathbb{C}^{\bar{d}} \to X$ and $h(\lambda+\rho)\colon \mathbb{C}^{d} \to X$ are defined by
\begin{align*}
e(\lambda+\rho)\beta (\theta) &:= \mathrm{e}^{(\lambda+\rho)\theta} \beta=w(\theta)\mathrm{e}^{\lambda \theta}\beta, \\
h(\lambda+\rho)\gamma (\theta) &:= \textstyle{\left(\sum_{i=1}^d \gamma_i \xi_i \right) \int_0^\theta \mathrm{e}^{(\lambda +\rho)(\theta-s)} w(s)\, \diff s  } , \quad \theta \in \mathbb{R}_-.
\end{align*}
Now, \eqref{eigsol} is an eigenfunction of $\mathcal{A}_0+\mathcal{L}$ corresponding to $\lambda$ if and only if there exists $(\beta;\gamma) \neq 0$ such that $\beta=\mathcal{B}x$ and $\gamma=\mathcal{L}^\xi x,$ i.e., if and only if $\lambda$ is a root of \eqref{CE}.
\end{proof}
The computation of (part of) the eigenvalues of $\mathcal{A}_0+\mathcal{L}$ in $\mathbb{C}_\rho$ is therefore turned into the computation of the roots of \eqref{CE}.
From \cref{PLS}, the position of the characteristic roots of \eqref{CE} in $\mathbb{C}_{\rho}$ determines the stability of the zero solution of \eqref{LADE}.


\smallskip
Regarding the approximating system \eqref{ODE}, the PLS holds from the classical theory of ODEs. To approximate the stability of an equilibrium $\bar{u}$ of \eqref{ADE}, we first need the following results to relate the linearized equations.
\begin{theorem}[One-to-one correspondence of equilibria]\label{th:one-to-one}
Let $N\geq 1$. 
If  $\overline{u}\in D(\mathcal{A}_0)$ is an equilibrium of \eqref{ADE}, then $\overline{U} = \mathcal{R}_N \overline{u}$ is an equilibrium of \eqref{ODE}. Vice versa, if $\overline{U}$ is an equilibrium of \eqref{ODE}, then $\overline{u} = \mathcal{P}_{N,0} \overline{U}$ is an equilibrium of \eqref{ADE}.

Moreover, linearization at equilibrium and PSD commute. In other words, the linearization of \eqref{ODE} at $\overline{U}$ coincides with the PSD of \eqref{LADE} and reads
\begin{equation}\label{LODE}
\frac{\diff U}{\diff t} (t)=\mathcal{A}_{0,N} U(t) +\mathcal{L}_NU(t), \quad t \geq 0,
\end{equation}
with $\mathcal{L}_N = \mathcal{R}_N \mathcal{L} \mathcal{P}_{N,0}$.
\end{theorem}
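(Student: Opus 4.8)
The plan is to reduce everything to two structural facts about the discretization operators. First, $\mathcal{R}_N\mathcal{P}_{N,0}$ is the identity on $\mathbb{C}^{N\bar d}$: the prolongation interpolates the data at the nodes, so restriction inverts it. Consequently $\mathcal{P}_{N,0}\mathcal{R}_N$ fixes every element of the range of $\mathcal{P}_{N,0}$, which is $D(\mathcal{A}_0)\cap w\Pi_N^{\bar d}$. Second, I would use the degree-drop property noted before the statement, that $\mathcal{A}_0\mathcal{P}_{N,0}$ maps into $w\Pi_{N-1}^{\bar d}$, together with the finite-rank form \eqref{F-finite-rank} which places $\mathcal{F}$ into $\Xi_d=w\,\mathrm{span}\{\xi_i\}\subseteq w\Pi_0^{\bar d}$. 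These two facts are what make the collocation \emph{exact} on equilibria rather than merely convergent.

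First I would characterize the equilibria of \eqref{ADE}. Writing the equilibrium equation as $\mathcal{A}_0\bar u=-\mathcal{F}(\bar u)$ and noting $\mathcal{F}(\bar u)=wc$ for the constant vector $c=\sum_i\mathcal{F}_i(\bar u)\xi_i$, the identity $\mathcal{A}_0 x=w\frac{\diff}{\diff\theta}(x/w)$ reduces this to $(\bar u/w)'=-c$. Hence $\bar u/w$ is affine, so $\bar u\in w\Pi_1^{\bar d}\subseteq w\Pi_N^{\bar d}$ for every $N\ge1$, and therefore $\bar u$ lies in the range of $\mathcal{P}_{N,0}$, giving $\mathcal{P}_{N,0}\mathcal{R}_N\bar u=\bar u$. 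The forward implication is then a one-line computation: with $\bar U=\mathcal{R}_N\bar u$ one has $\mathcal{A}_{0,N}\bar U+\mathcal{F}_N(\bar U)=\mathcal{R}_N[\mathcal{A}_0(\mathcal{P}_{N,0}\mathcal{R}_N\bar u)+\mathcal{F}(\mathcal{P}_{N,0}\mathcal{R}_N\bar u)]=\mathcal{R}_N[\mathcal{A}_0\bar u+\mathcal{F}(\bar u)]=0$.

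For the converse I would set $\bar u=\mathcal{P}_{N,0}\bar U\in D(\mathcal{A}_0)\cap w\Pi_N^{\bar d}$, so the equilibrium condition for \eqref{ODE} reads $\mathcal{R}_N[\mathcal{A}_0\bar u+\mathcal{F}(\bar u)]=0$. This is the crux, and where the degree count does the work: by the two facts above the residual $\mathcal{A}_0\bar u+\mathcal{F}(\bar u)$ belongs to $w\Pi_{N-1}^{\bar d}$, so it can be written as $wq$ with $q\in\Pi_{N-1}^{\bar d}$; since $w(\theta_j)\ne0$, the condition forces $q(\theta_j)=0$ at the $N$ distinct nodes, hence $q\equiv0$ because $q$ has degree at most $N-1$. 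Therefore $\mathcal{A}_0\bar u+\mathcal{F}(\bar u)=0$, so $\bar u$ is an equilibrium of \eqref{ADE}, and consistency is immediate since $\mathcal{R}_N\bar u=\mathcal{R}_N\mathcal{P}_{N,0}\bar U=\bar U$.

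Finally, the commuting statement follows from linearity and the chain rule. Since $\mathcal{R}_N$ and $\mathcal{P}_{N,0}$ are linear and $\mathcal{F}_N=\mathcal{R}_N\mathcal{F}\circ\mathcal{P}_{N,0}$, differentiating at $\bar U$ gives $D\mathcal{F}_N(\bar U)=\mathcal{R}_N\,D\mathcal{F}(\mathcal{P}_{N,0}\bar U)\,\mathcal{P}_{N,0}=\mathcal{R}_N\mathcal{L}\mathcal{P}_{N,0}=\mathcal{L}_N$, using $\bar u=\mathcal{P}_{N,0}\bar U$ and $\mathcal{L}=D\mathcal{F}(\bar u)$. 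Thus the linearization of \eqref{ODE} at $\bar U$ is $\mathcal{A}_{0,N}U+\mathcal{L}_NU$, which is by definition the PSD of \eqref{LADE}. I expect the only genuine obstacle to be the converse direction: one must recognize that the residual of the collocated equilibrium equation lives in the reduced space $w\Pi_{N-1}^{\bar d}$, so that vanishing at the $N$ nodes already forces it to vanish identically. Everything else is bookkeeping around $\mathcal{R}_N\mathcal{P}_{N,0}=I$ and the fact that equilibria are exactly representable for every $N\ge1$.
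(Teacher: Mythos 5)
Your proof is correct and follows essentially the same route as the paper's: you show that any equilibrium $\overline{u}$ of \eqref{ADE} satisfies $\overline{u}/w$ affine, hence is exactly reproduced by $\mathcal{P}_{N,0}\mathcal{R}_N$, push the zero residual through $\mathcal{R}_N$ for the forward direction, use the degree count (residual in $w\Pi_{N-1}^{\bar d}$ vanishing at $N$ nodes) for the converse, and apply the chain rule for the commutation of linearization and discretization. The only difference is presentational — you isolate $\mathcal{R}_N\mathcal{P}_{N,0}=I$ as an explicit structural fact, which the paper uses implicitly.
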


\begin{proof}
First, if $\overline{u} \in D(\mathcal{A}_0)$ is an equilibrium of \eqref{ODE}, then $\overline{u}(0) = \mathcal{B}\overline{u}$. From \eqref{F-finite-rank} and the equality $\mathcal{A}_0\overline{u}+\mathcal{F}(\overline{u})=0$, integrating we have 
$$\frac{\diff }{\diff \theta}\left(\frac{\overline{u}}{w}\right) = - \frac{\mathcal{F}(\overline{u})}{w} = - \sum_{i=1}^d  \mathcal{F}_i(\overline{u}) \xi_i \in \mathbb{C}^{\bar{d}} \quad \Rightarrow \quad \overline{u}(\theta) = w(\theta) (\mathcal{B}\overline{u} - \theta \sum_{i=1}^d  \mathcal{F}_i(\overline{u}) \xi_i). $$
Since $\overline{u}/w$ is a polynomial of degree one and $ \overline{u}(0) = \mathcal{B}\overline{u}$, defining $\overline{U} := \mathcal{R}_N \overline{u}$, we have $\mathcal{P}_{N,0} \overline{U} = \overline{u}$. Therefore
$$ \mathcal{A}_{0,N}\overline{U}+\mathcal{F}_N(\overline{U})=\mathcal{R}_N (\mathcal{A}_0 \overline{u} + \mathcal{F}(\overline{u})) = 0,$$
hence $\overline{U}$ is an equilibrium of \eqref{ODE}. 
Vice versa, let $\overline{U}$ be an equilibrium of \eqref{ODE} and let $\overline{u}:= \mathcal{P}_{N,0}\overline{U} \in D(\mathcal{A}_0).$ Note that $(\mathcal{A}_0 \overline{u})/w$ is a polynomial of degree at most $N-1$, and the function 
$((\mathcal{A}_0\overline{u})/w +\sum_{i=1}^d  \mathcal{F}_i(\overline{u}) \xi_i)$ is zero on $\Theta_N$. Therefore, it must be zero as a polynomial, and the polynomial $\overline{u}$ is in fact an equilibrium of \eqref{ADE}. 

Let us now consider the linearization at $\overline{u}$. Trivially, \eqref{LODE} is the PSD of \eqref{LADE}. 
On the other hand, for the linearity of $\mathcal{R}_N$ and $\mathcal{P}_{N,0}$ we have \\[5pt]
{\centering 
$ D (\mathcal{R}_N \mathcal{F} \mathcal{P}_{N,0} ) (\overline{U}) = \mathcal{R}_N D (\mathcal{F} \mathcal{P}_{N,0} ) (\overline{U}) = (\mathcal{R}_N D\mathcal{F} (\mathcal{P}_{N,0}\overline{U}) \mathcal{P}_{N,0}) \overline{U} = \mathcal{R}_N \mathcal{L} \mathcal{P}_{N,0}  \overline{U}. 
$} \medskip
\hfill\qedsymbol{}
\end{proof}
For \eqref{LODE} we derive a discrete characteristic equation following \cite[Proposition 5.2]{BMVBook}.
\begin{proposition}\label{PCEN}
The eigenvalues of $\mathcal{A}_{0,N} +\mathcal{L}_N$ in $\mathbb{C}_\rho$ are the roots of the discrete characteristic equation
\begin{equation}\label{CEN}
\det(I_{\bar{d}+d}-A_N(\lambda))=0,
\end{equation}
where $A_N(\lambda): \mathbb{C}^{\bar{d}+d} \to \mathbb{C}^{\bar{d}+d}$ is the linear operator
\begin{equation*}
A_N(\lambda)(\beta;\gamma)=(\mathcal{B}x_{N,0}(\cdot;\lambda,\beta,\gamma);\mathcal{L}x_{N,0}(\cdot;\lambda,\beta,\gamma)),
\end{equation*}
and $x_{N,0}(\cdot;\lambda,\beta,\gamma) \in w\Pi_N^{\bar{d}}$ is the solution of the collocation equation
\begin{equation}\label{coll_gamma}
\begin{cases}
\displaystyle{\frac{\diff x_{N,0}}{\diff \theta}}(\theta_{i})=(\lambda +\rho)x_{N,0}(\theta_{i})+ w(\theta_{i})\sum_{i=1}^d \gamma_i\xi_i\, \quad i=1,\dots,N,\\[5pt]
x_{N,0}(0)=\beta.
\end{cases}
\end{equation} 
\end{proposition}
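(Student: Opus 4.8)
The plan is to mirror the proof of the continuous characterization \eqref{CE}, transporting the matrix eigenvalue problem for $\mathcal{A}_{0,N}+\mathcal{L}_N$ to the collocation problem \eqref{coll_gamma} by means of $\mathcal{R}_N$ and $\mathcal{P}_{N,0}$. The two structural facts I would establish first are the identities $\mathcal{R}_N\mathcal{P}_{N,0}=I_{N\bar d}$ and, exactly as in the proof of \cref{th:one-to-one}, $\mathcal{P}_{N,0}\mathcal{R}_N x = x$ for every $x\in D(\mathcal{A}_0)\cap\Pi_N^{\bar d}$; together they make $\mathcal{R}_N$ and $\mathcal{P}_{N,0}$ mutually inverse bijections between $\mathbb{C}^{N\bar d}$ and the space $D(\mathcal{A}_0)\cap\Pi_N^{\bar d}$ of degree-$\le N$ polynomials obeying $x(0)=\mathcal{B}x$. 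Since $\mathcal{A}_{0,N}+\mathcal{L}_N=\mathcal{R}_N(\mathcal{A}_0+\mathcal{L})\mathcal{P}_{N,0}$, a pair $(\lambda,U)$ with $U\neq 0$ is an eigenpair if and only if the polynomial $x_{N,0}:=\mathcal{P}_{N,0}U\in w\Pi_N^{\bar d}$, which is nonzero because $\mathcal{R}_N x_{N,0}=U\neq 0$, satisfies the nodal identity $\mathcal{R}_N(\mathcal{A}_0+\mathcal{L})x_{N,0}=\lambda\,\mathcal{R}_N x_{N,0}$.

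Next I would unfold this identity. Writing $\mathcal{A}_0 x_{N,0}=\frac{\diff x_{N,0}}{\diff\theta}-\rho x_{N,0}$ from \eqref{A0} and $\mathcal{L}x_{N,0}=w\sum_{i=1}^d(\mathcal{L}_i x_{N,0})\xi_i$, and evaluating at each node $\theta_j$, the identity $\mathcal{R}_N(\mathcal{A}_0+\mathcal{L})x_{N,0}=\lambda\,\mathcal{R}_N x_{N,0}$ becomes, after rearranging, precisely the collocation equations of \eqref{coll_gamma} with the choices $\beta:=x_{N,0}(0)$ and $\gamma:=\mathcal{L}^\xi x_{N,0}$; moreover $x_{N,0}\in D(\mathcal{A}_0)$ supplies the boundary relation $x_{N,0}(0)=\mathcal{B}x_{N,0}$, i.e.\ $\beta=\mathcal{B}x_{N,0}$. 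Assuming \eqref{coll_gamma} is uniquely solvable for this $(\lambda,\beta,\gamma)$, the polynomial $x_{N,0}$ coincides with $x_{N,0}(\cdot;\lambda,\beta,\gamma)$, so the two self-consistency conditions $\beta=\mathcal{B}x_{N,0}$ and $\gamma=\mathcal{L}^\xi x_{N,0}$ read exactly $(\beta;\gamma)=A_N(\lambda)(\beta;\gamma)$. Here $(\beta;\gamma)\neq 0$, since $(\beta;\gamma)=0$ would force $x_{N,0}=x_{N,0}(\cdot;\lambda,0,0)=0$ by uniqueness, contradicting $x_{N,0}\neq 0$; this yields $\det(I_{\bar d+d}-A_N(\lambda))=0$. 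For the converse I would start from a nonzero fixed point $(\beta;\gamma)=A_N(\lambda)(\beta;\gamma)$, set $x_{N,0}:=x_{N,0}(\cdot;\lambda,\beta,\gamma)$ (nonzero, otherwise $\beta$ and $\gamma$ vanish), note that its defining relations together with $x_{N,0}(0)=\beta=\mathcal{B}x_{N,0}$ place it in $D(\mathcal{A}_0)\cap\Pi_N^{\bar d}$ and make it satisfy the nodal eigenvalue identity, and conclude that $U:=\mathcal{R}_N x_{N,0}$ is a genuine eigenvector (nonzero, because $U=0$ would give $x_{N,0}=\mathcal{P}_{N,0}\mathcal{R}_N x_{N,0}=0$).

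The step I expect to be the crux is the well-posedness invoked above: that for the relevant $\lambda$ the linear system \eqref{coll_gamma} admits a unique solution $x_{N,0}(\cdot;\lambda,\beta,\gamma)\in w\Pi_N^{\bar d}$ depending linearly on $(\beta;\gamma)$, so that $A_N(\lambda)$ is a well-defined $(\bar d+d)\times(\bar d+d)$ matrix. This is a square problem, $\bar d(N+1)$ polynomial coefficients against $N$ vector collocation conditions and the single vector condition $x_{N,0}(0)=\beta$, so it suffices to show that its homogeneous version ($\beta=0$, $\gamma=0$) has only the trivial solution. Writing $x_{N,0}=wp$ with $p\in\Pi_N^{\bar d}$, the quantity $\frac{\diff x_{N,0}}{\diff\theta}-(\lambda+\rho)x_{N,0}$ equals $w(p'-\lambda p)$, a multiple of $w$ by a polynomial of degree $\le N$ vanishing at the $N$ nodes; hence $p'-\lambda p=\omega_N v$ for some $v\in\mathbb{C}^{\bar d}$, with $\omega_N(\theta):=\prod_{j=1}^N(\theta-\theta_j)$, subject to $p(0)=0$. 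Matching the degree-$N$ coefficients fixes $v$ in terms of the leading coefficient of $p$ and reduces the homogeneous problem to the eigenvalue equation of the differentiation matrix with homogeneous boundary condition acting on the nodal values; the exceptional $\lambda$ at which uniqueness fails are exactly its eigenvalues. This is where the choice of nodes enters and where the argument connects to the exact spectral description of that differentiation matrix in \cref{S_appendix}: these exceptional values are isolated, the stated equivalence holds at every $\lambda\in\mathbb{C}_\rho$ away from this finite set, and the excluded values are handled separately using the analyticity of $\det(I_{\bar d+d}-A_N(\lambda))$.
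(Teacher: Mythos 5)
Your proof is correct and follows essentially the same route as the paper, which obtains \eqref{CEN} by the standard argument of \cite[Proposition 5.2]{BMVBook}: transport the matrix eigenvalue problem for $\mathcal{A}_{0,N}+\mathcal{L}_N$ through the mutually inverse maps $\mathcal{R}_N$ and $\mathcal{P}_{N,0}$ and read off the fixed-point condition $(\beta;\gamma)=A_N(\lambda)(\beta;\gamma)$. The only loose end is your closing remark: at a $\lambda$ where \eqref{coll_gamma} is not uniquely solvable the operator $A_N(\lambda)$ is simply undefined, so one cannot appeal to ``analyticity of $\det(I_{\bar d+d}-A_N(\lambda))$'' there; the paper instead disposes of this issue via \cref{Th00} and \cref{S_appendix}, which show that for the scaled Laguerre zeros and extrema the exceptional values lie at $\lambda=-2\rho_1$ or on $\Re\lambda=-\rho_1$, hence outside the region $\mathbb{C}_{\rho_1}$ in which the convergence analysis is carried out.
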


\subsection{Convergence of the approximation of characteristic roots}\label{SS_LADE}
We now show that the roots of \eqref{CEN} converge to the roots of \eqref{CE} as $N\to\infty$. We first assume that the operator $\mathcal{L}$ can be computed exactly and so its PSD is given by $\mathcal{L}_N = \mathcal{R}_N \mathcal{L} \mathcal{P}_{N,0}$. We later briefly discuss the impact of quadrature rules.

Here we take $\rho=\rho_1+\epsilon$ for some $\rho_1,\epsilon>0$, and we choose
\begin{equation} \label{scaled_zeros}
\theta_j := -\frac{t_{j,N}}{2\rho_1}, \quad j=1,\dots,N,
\end{equation}
where $t_{j,N}$, $j=1,\dots,N$ are either the zeros or the extrema of the standard Laguerre polynomials in $\mathbb{R}_+$ (see \cref{S_appendix} for details). We refer to \eqref{scaled_zeros} as the \emph{scaled Laguerre zeros} or \emph{extrema}. 

\begin{theorem}\label{Th00}
Let $\rho=\rho_1+\epsilon$ with $\rho_1,\, \epsilon>0$, and $B$ be an open ball in $\mathbb{C}_{\rho_1}$. 
For $\Theta_N$ chosen as the scaled Laguerre zeros or extrema \eqref{scaled_zeros}, the collocation equation \eqref{coll_gamma} admits a unique solution $x_{N,0}(\cdot;\lambda,\beta,\gamma) \in w\Pi_N^{\bar{d}}$ for all $N \geq 1$, $\lambda \in B$, $\beta \in \mathbb{C}^{\bar{d}}$ and $\gamma \in \mathbb{C}^d.$ 
Moreover, let $x(\cdot;\lambda,\beta,\gamma)$ be the solution of \eqref{IVP_gamma} and let
\begin{equation*}
C(\lambda):=\frac{\lambda}{\lambda+2\rho_1}, \quad \text{and } \quad K^p_\epsilon = 
\begin{cases}
	1 & \text{ for } p=\infty,\ \epsilon \geq 0,\\
  	(\frac{p\epsilon}{2\rho_1})^{-1/p} & \text{ for } 1 \leq p <\infty,\ \epsilon > 0.
\end{cases}
\end{equation*} 
The error 
$e_{N} = x_{N,0}(\cdot; \lambda,\beta,\gamma) - x(\cdot; \lambda,\beta,\gamma)$ satisfies 
\begin{equation*}
 \|e_{N}\|_X \leq {C}_0(B,\epsilon) D_N(B)\, |(\beta;\gamma)|\\
\end{equation*}
where for the scaled Laguerre zeros
$$ C_0(B,\epsilon) := k K^p_\epsilon \max_{\lambda \in \overline B} \left\{ \frac{\max\{|\lambda|,1\}}{\Re \lambda +\rho_1} \right\} \quad \text{ and }\quad 
D_N(B) :=\max_{\lambda \in \overline B} |C(\lambda)|^N,
$$
while for the scaled Laguerre extrema
$$
C_0(B,\epsilon) :=k K^p_\epsilon \, \max_{\lambda \in \overline B} \left\{2 + \frac{|\lambda|}{2\rho_1(\Re \lambda+\rho_1)}\right\} \quad \text{ and } \quad 
D_N(B) :=\max_{\lambda \in \overline B} \Big| \frac{C(\lambda)^{N}}{1- C(\lambda)^{N+1}} \Big|, 
$$
where $k$ is a positive constant.
Finally, since $|C(\lambda)|<1$, $\|e_N\|_X \to 0$ as $N \to \infty$.
\end{theorem}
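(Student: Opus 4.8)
The plan is to strip the exponential weight, represent the collocation error through its defect polynomial, and then read off the decay rate from explicit Laguerre identities. First I would set $x=wy$. This turns the initial value problem \eqref{IVP_gamma} into the constant-coefficient ODE $y'=\lambda y+c$, $y(0)=\beta$, with $c:=\sum_{i=1}^d\gamma_i\xi_i\in\mathbb{C}^{\bar{d}}$ and solution $y(\theta)=\mathrm{e}^{\lambda\theta}(\beta+c/\lambda)-c/\lambda$, while the collocation system \eqref{coll_gamma} becomes: find $p\in\Pi_N^{\bar{d}}$ with $p(0)=\beta$ and $p'(\theta_i)=\lambda p(\theta_i)+c$ for $i=1,\dots,N$. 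Then $x_{N,0}=wp$ and $e_N=w(p-y)$; since the norm on $X$ is the unweighted $p$-norm, $\|e_N\|_X=\|w(p-y)\|_{L^p(\mathbb{R}_-)}$.

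Next I would exploit that the defect $p'-\lambda p-c$ is a polynomial of degree at most $N$ vanishing at the $N$ nodes, hence $p'-\lambda p-c=K\,\omega_N$ with $\omega_N(\theta):=\prod_{i=1}^N(\theta-\theta_i)$ the monic nodal polynomial and $K\in\mathbb{C}^{\bar{d}}$ to be found. The error $r:=p-y$ then solves $r'=\lambda r+K\omega_N$, $r(0)=0$, so $r(\theta)=K\int_0^\theta\mathrm{e}^{\lambda(\theta-s)}\omega_N(s)\,\diff s$. Introducing the unique polynomial $P_N$ with $P_N'-\lambda P_N=\omega_N$ (equivalently $\int_0^\theta\mathrm{e}^{-\lambda s}\omega_N(s)\,\diff s=\mathrm{e}^{-\lambda\theta}P_N(\theta)-P_N(0)$), the requirement that $p=r+y$ be a polynomial forces the cancellation of its exponential part, i.e. $K\,P_N(0)=\beta+c/\lambda$, whence $p=K P_N-c/\lambda$ and $r(\theta)=K\,[P_N(\theta)-P_N(0)\mathrm{e}^{\lambda\theta}]$. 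Thus the collocation problem is uniquely solvable precisely when $P_N(0)\neq0$. For the scaled Laguerre zeros \eqref{scaled_zeros} one has $\omega_N(\theta)=\frac{N!}{(2\rho_1)^N}L_N(-2\rho_1\theta)$, and the Laplace-transform identity collected in \cref{S_appendix} gives $P_N(0)=-\frac{N!\,(\lambda+2\rho_1)^N}{(2\rho_1)^N\lambda^{N+1}}$, which is nonzero throughout $\mathbb{C}_{\rho_1}$ (the value $\lambda=0$ being handled directly, as $y$ is then already a polynomial and the error vanishes).

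For the bound I would split $w=\mathrm{e}^{\epsilon\theta}\mathrm{e}^{\rho_1\theta}$ and estimate $\|e_N\|_{L^p}\le k\,K^p_\epsilon\,\|\mathrm{e}^{\rho_1\cdot}r\|_\infty$, where integrating the $\mathrm{e}^{\epsilon\theta}$ factor produces $K^p_\epsilon$ and $k$ absorbs the remaining numerical constants. To control the sup-norm I return to the integral representation of $r$: under $t=-2\rho_1 s$ the relevant factor is the Laguerre function $\mathrm{e}^{\rho_1 s}\omega_N(s)=\frac{N!}{(2\rho_1)^N}\mathrm{e}^{-t/2}L_N(t)$, bounded by $\frac{N!}{(2\rho_1)^N}$ through the classical estimate $|\mathrm{e}^{-t/2}L_N(t)|\le1$; combined with $|\mathrm{e}^{(\rho_1+\lambda)(\theta-s)}|\le1$ and $\int_\theta^0\mathrm{e}^{(\rho_1+\Re\lambda)(\theta-s)}\,\diff s\le(\rho_1+\Re\lambda)^{-1}$ this yields $\|\mathrm{e}^{\rho_1\cdot}r\|_\infty\le|K|\,\frac{N!}{(2\rho_1)^N}(\rho_1+\Re\lambda)^{-1}$. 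Inserting $|K|=|\beta+c/\lambda|/|P_N(0)|$ and the explicit $P_N(0)$ collapses the factorials, since $\frac{N!}{(2\rho_1)^N|P_N(0)|}=|\lambda|\,|C(\lambda)|^N$, leaving a bound proportional to $\frac{\max\{|\lambda|,1\}}{\Re\lambda+\rho_1}|C(\lambda)|^N|(\beta;\gamma)|$; maximizing over $\overline{B}$ gives exactly $C_0(B,\epsilon)D_N(B)$ for the zeros. The extrema case is structurally identical, with $\omega_N$ the nodal polynomial of the scaled Laguerre extrema; the value of $P_N(0)$ and the uniform bound from \cref{S_appendix} produce the geometric correction $1/(1-C(\lambda)^{N+1})$, hence $D_N(B)=\max_{\lambda\in\overline{B}}|C(\lambda)^N/(1-C(\lambda)^{N+1})|$. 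Finally $|C(\lambda)|<1$ on $\mathbb{C}_{\rho_1}$ because $|\lambda+2\rho_1|^2-|\lambda|^2=4\rho_1(\Re\lambda+\rho_1)>0$, so $D_N(B)\to0$ and $\|e_N\|_X\to0$.

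I expect the Laguerre-specific analysis to be the main obstacle: the exact evaluation of $P_N(0)$ (equivalently the Laplace transform of $L_N$ at $s=1+\lambda/(2\rho_1)$, where the factor $C(\lambda)^N$ is born) together with the uniform Laguerre-function estimates, which differ in detail between the zeros and the extrema and are the reason the two cases carry different $C_0$ and $D_N$. These are exactly the ingredients I would isolate in \cref{S_appendix}; once they are available, the reduction, defect representation, and weight-splitting above are routine.
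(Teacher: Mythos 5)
Your argument is correct and reaches exactly the bounds in the statement; it shares the paper's skeleton (strip the weight to get an unweighted constant-coefficient collocation problem, write the defect as a constant multiple of the nodal polynomial, represent the error by variation of constants, and finish with the classical bound $|\mathrm{e}^{-t/2}L_N(t)|\le 1$ and the splitting $w=\mathrm{e}^{\epsilon\cdot}\mathrm{e}^{\rho_1\cdot}$ that produces $K^p_\epsilon$). The one genuine difference is how the defect constant is pinned down. The paper rescales to $\mathbb{R}_+$ and, in \cref{S_appendix}, expands everything in the monomial basis, derives the recurrence \eqref{rec}, and solves $a_N=0$ to get $k_N$ through the quantity $d_N$ of \eqref{dj}, evaluated by the binomial theorem in \eqref{dN-zeros} and \eqref{dN-extrema}. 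You instead stay on $\mathbb{R}_-$ and introduce the polynomial particular solution $P_N$ of $P_N'-\lambda P_N=\omega_N$, so that solvability is equivalent to $P_N(0)\neq 0$ and $K=(\beta+c/\lambda)/P_N(0)$; your $P_N(0)$ is, up to normalization, exactly the paper's $d_N$, and your closed form $P_N(0)=-N!\,(\lambda+2\rho_1)^N/((2\rho_1)^N\lambda^{N+1})$ checks out against \eqref{dN-zeros} under $\mu=-\lambda/(2\rho_1)$. Your route is arguably more conceptual (it makes transparent that the solvability obstruction is a Laplace-transform-type evaluation of the nodal polynomial, and that non-uniqueness would occur exactly at the eigenvalues of the reduced differentiation matrix, the paper's closing remark), at the cost of having to treat $\lambda=0$ separately, which you do. Two small points to keep in mind when writing this up: the vector-valued case requires $|c|\le k'|\gamma|$ via the fixed basis $\{\xi_i\}$ before you can absorb constants into $k$, and your direct-on-$\mathbb{R}_-$ computation yields $(p\epsilon)^{-1/p}$ rather than $(p\epsilon/(2\rho_1))^{-1/p}$ for the weight integral, a discrepancy of a fixed factor $(2\rho_1)^{1/p}$ that is harmlessly absorbed into $k$.
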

\begin{proof}
First we observe that, for the solution of \eqref{IVP_gamma}, we can write $x=w\varphi \in L^p$ with $\varphi \in L^p_\rho$ solution of 
\begin{equation*}
\begin{cases}
\displaystyle{\frac{\diff \varphi}{\diff \theta}} = \lambda \varphi + \textstyle{\sum_{i=1}^d \gamma_i\xi_i}  & \theta \leq 0,\\[5pt]
\varphi(0) = \beta.
\end{cases}
\end{equation*}
Second, by defining 
\begin{equation*}
	\theta \mapsto t := - 2\rho_1 \theta, \quad 
	\lambda \mapsto \mu:= - \frac{\lambda}{2\rho_1}, \quad \text{and} \quad  
	\sum_{i=1}^d \gamma_i\xi_i\mapsto c := - \frac{1}{2\rho_1}\sum_{i=1}^d \gamma_i\xi_i,
\end{equation*}
we transform the problem for $\varphi$ on $\mathbb{R}_-$ with $\Re \lambda > -\rho_1$ into the problem 
\begin{equation*}
\begin{cases}
\displaystyle{\frac{\diff y}{\diff \theta}} = \mu y+ c,  & t \geq 0,\\[5pt]
y(0) = \beta,
\end{cases}
\end{equation*} 
for $y$ on $\mathbb{R}_+$ with $\Re \mu < \frac{1}{2}.$ Under this mapping, $\mathrm{e}^{\rho_1\theta}=\mathrm{e}^{-t/2},$ while $w(\theta) = \mathrm{e}^{\rho\theta} = \mathrm{e}^{-(1/2+\delta)t}$ with $\delta := \frac{\epsilon}{2\rho_1}$, and the collocation nodes $\theta_{j}$ are associated to the zeros or extrema $t_j$ of the standard Laguerre polynomials in $\mathbb{R}_+.$ 
Now the result follows by applying \cref{pr:existence_zeros} and \cref{pr:existence_extrema} in \cref{S_appendix} on the positive real line $\mathbb{R}_+$ and mapping  the bounds \eqref{eNLX} and \eqref{eNRLX} back to $\mathbb{R}_-.$
\end{proof}
 
Finally, from the error bounds on the collocation equation we are ready to conclude the error bounds on the characteristic roots in the following theorem.
\begin{theorem}\label{Th2}
Let $\rho=\rho_1+\epsilon$, with $\rho_1, \epsilon>0,$ and $\lambda^* \in \mathbb{C}_{\rho_1}$ be a root of \eqref{CE} with multiplicity $m$. Let $B$ be an open ball in $\mathbb{C}_{\rho_1}$ of center $\lambda^*$ such that $\lambda^*$ is the unique root of \eqref{CE} in $B$. 
For $\Theta_N$ chosen as the scaled Laguerre zeros or extrema \eqref{scaled_zeros}, there exists $N_1:=N_1(B)$ such that, for $N\geq N_1$, equation \eqref{CEN} has $m$ roots $\lambda_{i,N}$, $i=1,\dots,m$ (taking into account multiplicities) and 
\begin{equation}\label{lambda_error}
\max_{i=1,\dots,m}| \lambda^*-\lambda_{i,N}|=O\left(D_N(B)^{1/m} \right).
\end{equation}
Hence $\displaystyle{\max_{i=1,\dots,m}} | \lambda^*-\lambda_{i,N}| \to 0$ as $N\to \infty$.
\end{theorem}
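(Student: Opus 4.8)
The plan is to regard the two characteristic functions
\[
f(\lambda):=\det\bigl(I_{\bar d+d}-A(\lambda)\bigr),\qquad
f_N(\lambda):=\det\bigl(I_{\bar d+d}-A_N(\lambda)\bigr)
\]
as analytic functions on $B$, to show that $f_N\to f$ uniformly on $\overline B$ with a rate controlled by $D_N(B)$, and then to invoke Rouch\'e's theorem together with a local factorization of $f$ at $\lambda^*$ to transfer this rate to the roots. First I would bound the matrix perturbation $A(\lambda)-A_N(\lambda)$. Since $A(\lambda)$ and $A_N(\lambda)$ apply the same bounded functionals $\mathcal B$ and $\mathcal L^\xi$ to $x(\cdot;\lambda,\beta,\gamma)$ and to $x_{N,0}(\cdot;\lambda,\beta,\gamma)$ respectively, one has $(A(\lambda)-A_N(\lambda))(\beta;\gamma)=-(\mathcal B e_N;\,\mathcal L^\xi e_N)$ with $e_N$ as in \cref{Th00}; using boundedness of $\mathcal B,\mathcal L^\xi$ and the estimate of \cref{Th00} gives
\[
\|A(\lambda)-A_N(\lambda)\|\le \bigl(\|\mathcal B\|+\|\mathcal L^\xi\|\bigr)\,C_0(B,\epsilon)\,D_N(B)
\]
uniformly for $\lambda\in\overline B$.

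Next I would upgrade this to uniform convergence of the characteristic functions. The solution $x(\cdot;\lambda,\beta,\gamma)$ depends analytically on $\lambda$ through the explicit representation \eqref{eigsol}, and by \cref{Th00} the collocation system \eqref{coll_gamma} is uniquely solvable for every $\lambda\in B$, so $x_{N,0}(\cdot;\lambda,\beta,\gamma)$ depends analytically on $\lambda$ as well; hence $A,A_N$ are analytic matrix-valued maps and $f,f_N$ are analytic on $B$. Because $A_N\to A$ uniformly on $\overline B$, for $N$ large the matrices $A_N(\lambda)$ stay in a fixed bounded set on which the determinant is Lipschitz, and combining this with the previous bound yields $\sup_{\lambda\in\overline B}|f(\lambda)-f_N(\lambda)|\le C\,D_N(B)$ for a constant $C$ independent of $N$.

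The root count and the rate then follow from complex analysis. Since $\lambda^*$ is the unique root of $f$ in $B$ with multiplicity $m$, $f$ does not vanish on $\partial B$, so $\delta_0:=\min_{\partial B}|f|>0$; as $D_N(B)\to0$ (because $|C(\lambda)|<1$ on $\overline B\subset\mathbb C_{\rho_1}$) one can pick $N_1$ with $C\,D_N(B)<\delta_0$ for $N\ge N_1$, and Rouch\'e's theorem then gives exactly $m$ roots $\lambda_{1,N},\dots,\lambda_{m,N}$ of $f_N$ in $B$, counted with multiplicity. For the rate I would use the analytic factorization $f(\lambda)=(\lambda-\lambda^*)^m g(\lambda)$ with $g(\lambda^*)\ne0$: evaluating at a root $\lambda_{i,N}$ and using $f_N(\lambda_{i,N})=0$ gives
\[
|\lambda_{i,N}-\lambda^*|^m\,|g(\lambda_{i,N})|=|f(\lambda_{i,N})-f_N(\lambda_{i,N})|\le C\,D_N(B),
\]
and since $g$ is bounded away from zero near $\lambda^*$ this yields $|\lambda_{i,N}-\lambda^*|=O(D_N(B)^{1/m})$, i.e.\ \eqref{lambda_error}; convergence to zero is then immediate.

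The heavy analytic lifting is already done in \cref{Th00}, so the remaining delicate points are mostly bookkeeping. The first is the norm matching in the perturbation bound: one must ensure that $\mathcal B$ and $\mathcal L^\xi$ are continuous with respect to the norm in which \cref{Th00} controls $e_N$, which is where the specific structure of the abstract framework (in particular the $AC_0$ reformulation for iREs) enters. The second, and conceptually the essential one, is the passage to the fractional exponent $1/m$: a zero of multiplicity $m$ absorbs a perturbation of size $\varepsilon$ only up to a displacement of order $\varepsilon^{1/m}$, which is precisely what degrades the geometric rate $D_N(B)$ into $D_N(B)^{1/m}$ and explains why a multiple characteristic root is approximated more slowly than a simple one.
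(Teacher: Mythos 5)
Your proposal is correct and follows essentially the same route as the paper's proof: the perturbation bound $\|A(\lambda)-A_N(\lambda)\|\lesssim C_0(B,\epsilon)D_N(B)$ deduced from \cref{Th00} via the boundedness of $\mathcal{B}$ and $\mathcal{L}^\xi$, a Lipschitz-type bound on the determinant (the paper's constant $C_1(B)$ built from $(\det)'$), and Rouch\'e's theorem combined with the local factorization $f(\lambda)=(\lambda-\lambda^*)^m g(\lambda)$ to obtain the exponent $1/m$ --- details the paper delegates to \cite[Section 5.3.2]{BMVBook} but which you have correctly reconstructed.
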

\begin{proof}
The steps follow the lines of \cite[Section 5.3.2]{BMVBook}, to which we refer to further details.
From \cref{Th00}, for all choices of the $p$-norm and for both choices of the nodes, for all $\lambda \in B$ we can bound
\begin{align*}
\|(A(\lambda) - A_N(\lambda))(\beta;\gamma)\|_X 
	&\leq \|(\mathcal{B},\mathcal{L}^{\xi})\| \cdot \|e_N \|_X 
	\leq \|(\mathcal{B},\mathcal{L}^{\xi})\| \cdot C_0(B,\epsilon) D_N(B) |(\beta;\gamma)|,
\end{align*}
with $C_0(B,\epsilon)$, $D_N(B)$ obtained from \cref{Th00}.
Therefore, considering the operator norm in $\mathbb{C}^{\bar{d}+d}$, following the proof of \cite[Proposition 5.3]{BMVBook} we obtain the norm bound
$$ \|(A(\lambda) - A_N(\lambda))\| \leq C_1(B) C_0(B,\epsilon)  D_N(B)
$$
for $C_1(B) := \max \left\{ \left\| (\det)' (I_{\bar{d}+d} - A(z) + \Delta ) \right\|  \colon  z \in \overline{B}, \ \Delta \in \mathbb{C}^{\bar{d}+d}, \ \|\Delta\|\leq 1 \right\}. $
This allows us to conclude the proof using Rouch\'e's Theorem from complex analysis \cite[Section 7.7]{Prietsley1990}. 
\end{proof}

The approximations \eqref{ODE} and \eqref{LODE} hold when $\mathcal{F}$ and $\mathcal{L}$ can be computed exactly. However, such operators often involve integrals, hence need to be approximated by $\widetilde{\mathcal{F}}$ and $\widetilde{\mathcal{L}}$ via suitable quadrature rules. In this case, \eqref{ODE}, \eqref{LODE} and \eqref{CEN} still hold with the operators defined by $\mathcal{F}_N= \mathcal{R}_N\widetilde{\mathcal{F}}\mathcal{P}_{N,0}$, $\mathcal{L}_N= \mathcal{R}_N\widetilde{\mathcal{L}}\mathcal{P}_{N,0}$, and $A_N(\lambda)(\beta;\gamma)=(\mathcal{B}x_{N,0}(\cdot;\lambda,\beta,\gamma); \ \widetilde{\mathcal{L}}x_{N,0}(\cdot;\lambda,\beta,\gamma))$.
As a consequence, the convergence analysis should also account for error
$$
\widetilde{Q}(B):= \sup \left\{ \frac{\|(\mathcal{L}-\widetilde{\mathcal{L}})x(\cdot;\lambda,\beta,\gamma) \|_X}{|(\beta;\gamma)|} \quad \colon \quad
\lambda \in \overline{B}, \ (\beta;\gamma) \in \mathbb{C}^{\bar{d}+d} \setminus\{0\} \right\}
$$
and \eqref{lambda_error} should be replaced by
\begin{equation} \label{lambda_error_quad}
\max_{i=1,\dots,m}| \lambda^*-\lambda_{i,N}| =  O\left(D_N(B)^{1/m}\right)+ O\left(\widetilde{Q}(B)^{1/m}\right).
\end{equation}
In practice, it is often convenient to use quadrature formulas associated with $\Theta_N$ or $\Theta_{0,N}$, so that $\mathcal{L}$ is approximated by $\widetilde{\mathcal{L}}_N$ by Gauss--Laguerre (on the scaled Laguerre zeros) or Gauss--Radau--Laguerre (on the scaled Laguerre extrema plus zero) quadrature formulas respectively, with corresponding error $\widetilde{Q}_N(B)$. In this case, the convergence $\lambda_{i,N}\to\lambda^*$ follows from $\widetilde{Q}_N(B)\to 0$ as $N \to \infty$ and the convergence rate depends on the regularity of the integrands in suitable Sobolev spaces (see e.g. \cite[Section 5.1]{MastroianniBook} for Gaussian quadrature rules on $\mathbb{R}_+$).


\section{Numerical results}\label{S_Results}
We here present some numerical tests on linear and nonlinear iDEs to illustrate the error bounds obtained in \Cref{Th2}. For linear iDEs we compute the error on the characteristic roots and show convergence with exponential order for increasing values of $N$. For nonlinear iDEs, we perform equilibrium bifurcation analyses using the package MatCont (version 7p1) \cite{Matcont2008} for MATLAB to illustrate the flexibility of the approach. The numerical tests were performed using MATLAB (version 2023a).
The standard Laguerre collocation nodes and quadrature weights are computed using the publicly available MATLAB suites from \cite{G00} and \cite{Weideman2000}.
For ease of implementation, in the case of iDDEs we use one single entry to discretize the first element of a pair in the space $X$, hence $(\alpha,\psi) \in X$ is discretized by a vector of dimension $N+1$ (instead of dimension $\bar{d}N=2N$).
MATLAB codes to reproduce the tests are available at \url{https://github.com/scarabel/}.

\subsection{Linear iDEs}
Consider the linear iDDE
\begin{equation} \label{linear-DDE}
\frac{\diff y}{\diff t}(t) = a y(t) + \int_0^{+\infty} k(s) y(t-s) \diff s, \quad t \geq 0,
 \end{equation}
for $a \in \mathbb{R}$ and $k \in L^1(\mathbb{R}^+)$ with $ \int_0^\infty |k(s)| \mathrm{e}^{\rho^* s} \diff s < \infty$ for some $\rho^* > 0$.  
If $a+\int_0^{+\infty} k(s)  \diff s \neq 0,$ then $\bar{y}=0$ is the unique equilibrium of \eqref{linear-DDE}. The characteristic roots in $\mathbb{C}_\rho$ are the solutions of $\lambda = a + \widehat{k}(\lambda)$, where $\widehat{k}$ denotes the Laplace transform of $k$, and are eigenvalues of the linear operator $\mathcal{A}^{\text{DDE}} := \mathcal{A}_0^{\text{DDE}} + \mathcal{L}^{\text{DDE}}$, see \eqref{LADDE}. We study how they are approximated by the eigenvalues of the PSD $\mathcal{A}_N^{\text{DDE}}$.
The parameter values for the numerical tests are collected in \cref{t:linear-DE}.

\begin{table}[t]
\caption{Parameter values for the numerical tests on the linear iDDE \eqref{linear-DDE} and iRE \eqref{linear-RE}.}
\label{t:linear-DE}
\centering
\footnotesize{
\begin{tabular}{c lccccccl}
 \toprule
 & & kernel & $\mu$ & $a$ & $k_0$ & $\sigma$ & & $\lambda$ \\
 \midrule
(a1) & iDDE & exponential & 2 & 3 & $-6$ & - & & $\lambda=0$ \\
(a2) & iDDE & exponential & 2 & 3 & $-6$ & - & & $\lambda = a-\mu$ \\
(b) & iDDE & exponential & 2 & 2 & $-8$ & - & & $\lambda = \mu\, i$ \\
(c) & iDDE & exponential &  2 & 6 & $-16$ & - & & $\lambda = (a-\mu)/2$ (double) \\
(d) & iDDE & gamma & 4 & 0 & - & 2 & & $(\lambda-1)(\lambda+\mu)^2=\mu^2$ \\
(e) & iDDE & gamma & 4 & 0 & - & $\pi$ & & $\lambda = a + \widehat{k}(\lambda) $ \\
\midrule
(f) & iRE & non-monotonic & 1 & 1 & 1 & - & & \eqref{CE_sin} \\
(g) & iRE & non-monotonic & 1.5 & 1 & 3 & - & & \eqref{CE_sin} \\
\bottomrule
\end{tabular}
}
\end{table}
For an exponential kernel $k(s) = k_0 \mathrm{e}^{-\mu s}$, $\mu \in \mathbb{R}_+$, 
it is easy to show that the characteristic roots exist in $\{\Re \lambda>-\mu\}$ and they can be written explicitly as 
$$\lambda = (a-\mu\pm \sqrt{(\mu+a)^2+4k_0})/2.$$
Note in particular that: $\lambda=0$ is a root if $k_0=-a\mu$; the roots are real if $k_0 \geq -(\mu+a)^2/4$, and have double multiplicity if $k_0 = -(\mu+a)^2/4$. 
In the numerical tests we take the two choices $\rho_1=\mu/2$ and $\rho_1=\mu/4$ to compare the impact of the nodes used for quadrature, and we fix $\rho=\rho_1$. To study the convergence of eigenvalues and eigenvectors, the eigenvector $\psi_{\lambda,N}$ of $\mathcal{A}_N^{\text{DDE}}$ is first normalized such that the first component equals one to obtain an approximation of $\psi_\lambda = \mathrm{e}^{\lambda \cdot}$.
\cref{f:LinearDDE_conv} shows the log-log plot of the computed errors $|\lambda-\lambda_N|$ and $\|w(\psi_\lambda-\psi_{\lambda,N})\|_{\infty,N}$, where $\| \cdot \|_{\infty,N}$ indicates the discrete supremum vector norm (i.e., the maximum difference on the collocation nodes), as a function of $N$: for finite (polynomial) order of convergence we would expect to observe straight decreasing lines; an exponential (infinite) order of convergence translates into exponentially decreasing trends \cite[Section 2.4]{BoydBook}.
\cref{f:LinearDDE_conv} illustrates several features predicted by \cref{Th2}: the error decreases exponentially as $N$ increases; both larger modulus and larger real part imply slower convergence (a2, b); the maximal accuracy is halved for double roots (c); and the order of convergence depends on the quadrature error, so quadrature rules that are exact on the exponential kernel ($\rho_1=\mu/2$) guarantee faster convergence than non-exact rules ($\rho_1=\mu/4$). 
In the case of $\lambda=0$ (a1) the theoretical collocation error is zero if the quadrature is exact, and indeed the errors in both, eigenvalue and eigenvector, fall to machine precision already for $N=1$ when $\rho_1=\mu/2$, while the error coincides with the quadrature error, and the convergence is exponential, for $\rho_1=\mu/4$.
Moreover, for $\lambda=0$, Gauss-Laguerre quadrature rules and Gauss--Radau--Laguerre rules show a similar trend, while in the other tests the former seem to guarantee a faster exponential decay compared to the latter in general.

In tests (d) and (e) we considered a Gamma-distributed kernel $k(s)=f_{\mu}^{(\sigma)} (s)$, with rate parameter $\mu$ and shape parameter $\sigma$, i.e.,
\begin{equation}\label{Gamma-density}
f_{\mu}^{(\sigma)} (s) = \frac{\mu^\sigma s^{\sigma-1} \mathrm{e}^{-\mu s}}{\Gamma(\sigma)}, \quad s\geq 0.
\end{equation}
The reference eigenvalues in this case were obtained by solving algebraically the characteristic equation using the MATLAB solver \verb!fsolve!.
In this case, too, the choice of a suitable $\rho_1$ for quadrature rules can be linked to $\mu$, with $\rho_1=\mu/2$ giving faster convergence than $\rho_1=\mu/4$. 
The case of a non-integer shape parameter in \cref{f:LinearDDE_conv}(e) is particularly interesting, as it illustrates the role of the quadrature error in limiting the convergence order, as shown by \cref{lambda_error_quad}: the errors decay exponentially when the integrals are computed with the built-in MATLAB integral function (black lines, tolerance set at $10^{-12}$), while the convergence is of polynomial order (visible in the numerical errors decaying asymptotically as straight lines in the log-log diagram) when the integrals are approximated with the Gaussian quadrature rules associated to the collocation nodes (gray lines). 
The polynomial order of convergence in the latter case was estimated as $N^{\alpha}$ with $\alpha \approx -3.1479$.
\begin{figure}[t]
\centering
\includegraphics[width=\textwidth]{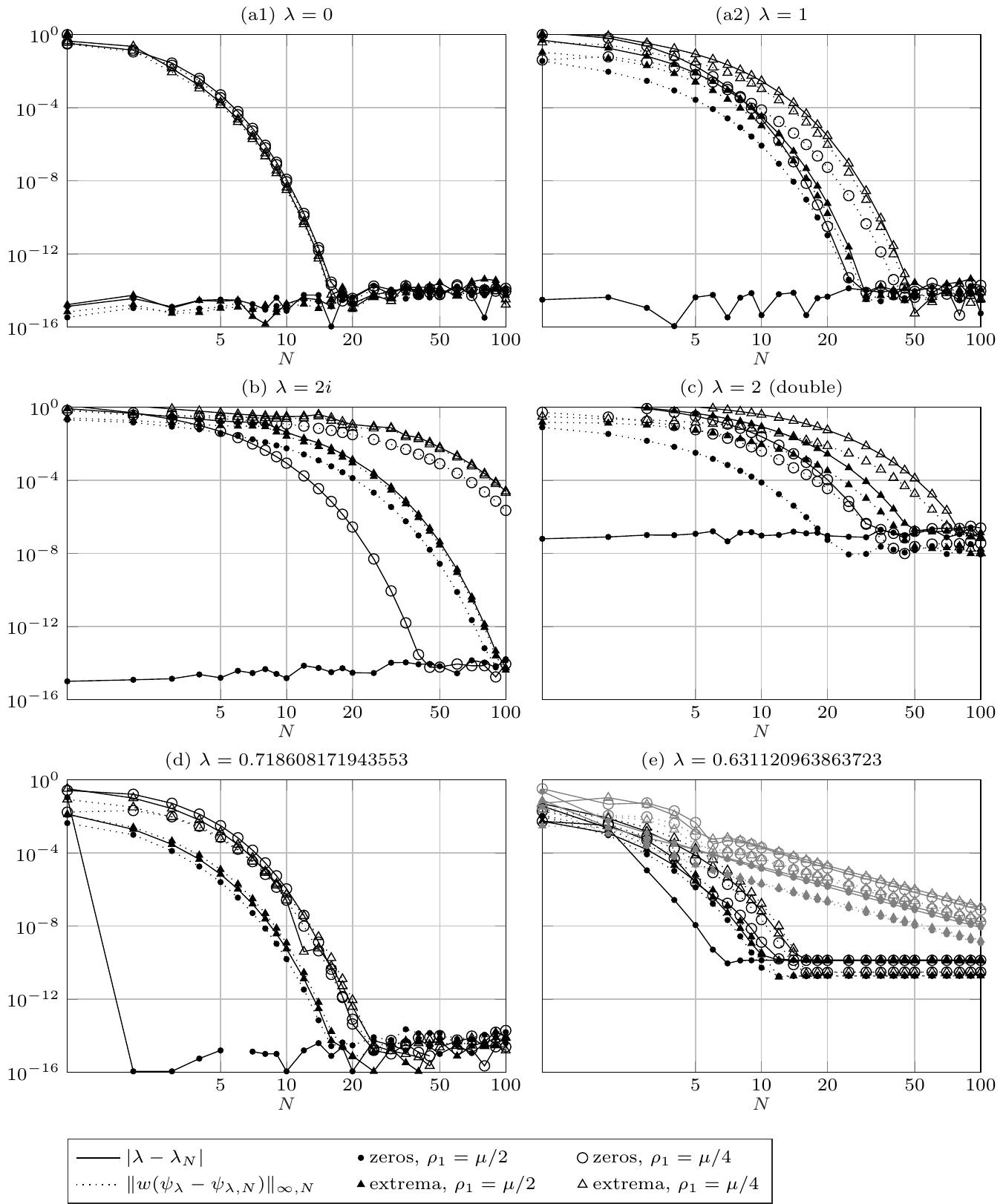}\\
\caption{
Linear DDE \eqref{linear-DDE}: 
log-log plot of $|\lambda-\lambda_N|$ (solid) and $\|w(\psi_\lambda-\psi_{\lambda,N})\|_{\infty,N}$ (dotted), for parameters specified in \cref{t:linear-DE}. See main text for further details.  
}
\label{f:LinearDDE_conv}
\end{figure}

Next, we consider the linear iRE 
\begin{equation}\label{linear-RE}
y(t) = \int_0^{+\infty} k(s) y(t-s) \diff s, \quad t > 0,
\end{equation}
where $k \in L^\infty(\mathbb{R}^+)$ with ${\rm ess} \sup_{s \in \mathbb{R}_+} |k(s)|\mathrm{e}^{\rho^* s} < \infty$ for some $\rho^* > 0$.  
If $\int_0^\infty k(s) \diff s \neq 0$, $\bar{y}=0$ is the unique equilibrium of \eqref{linear-RE}.
The characteristic roots in $\mathbb{C}_\rho$ are the solutions of $ 1 = \widehat{k}(\lambda)$, and are eigenvalues of the linear operator $\mathcal{A}^{\text{RE}} := \mathcal{A}_0^{\text{RE}} + \mathcal{L}^{\text{RE}}$, see \eqref{LARE}, with corresponding PSD $\mathcal{A}_N^{\text{RE}}$.

\cref{f:LinearRE_conv} shows the errors in the approximation of $\lambda$ and the corresponding eigenfunction $\psi_\lambda=j^{-1}(w\varphi_\lambda) = \mathrm{e}^{\lambda\cdot}$, in the discrete weighted $\infty$- and $1$-norm. 
To obtain a normalization comparable with the exact eigenfunction, we first applied the differentiation matrix to the discrete eigenfunction $\varphi_{\lambda,N}$ of $\mathcal{A}_N^{\text{RE}}$ and then normalized it so that the first component equals one, to obtain an approximation $\psi_{\lambda,N}$ of $\psi_\lambda$. 
For an exponential kernel, the convergence plots show very similar trends as for the iDDE, so we did not include them in the paper. Instead, we considered the non-monotone kernel $k(s) = k_0 \mathrm{e}^{-\mu s}(\sin(a s)+1)$, for $a,\mu, k_0 \in \mathbb{R}_+$, for which the characteristic roots (with $\Re\lambda>-\mu$) satisfy 
\begin{equation}\label{CE_sin}
 \lambda^3 + \lambda^2 ( 3 \mu -k_0) + \lambda ( 3 \mu^2  -2 k_0 \mu +a^2 - k_0 a ) + (\mu^2+a^2) (\mu- k_0) - k_0 a \mu = 0.
 \end{equation}
Parameter values are collected in \cref{t:linear-DE}.
In the numerical tests we constructed the collocation nodes and quadrature weights for two different choices, $\rho_1 = \mu/2$ and $\rho_1=\mu/3$, and we fixed $\rho = \mu>\rho_1$. The weighted 1-norm of the error is then computed with respect to $w(\theta) = \mathrm{e}^{\rho\theta}$. 
The convergence trends in \cref{f:LinearRE_conv} highlight similar behavior as for iDDEs, consistent with \cref{Th2}.

\begin{figure}[t]
\centering
\includegraphics[width=\textwidth]{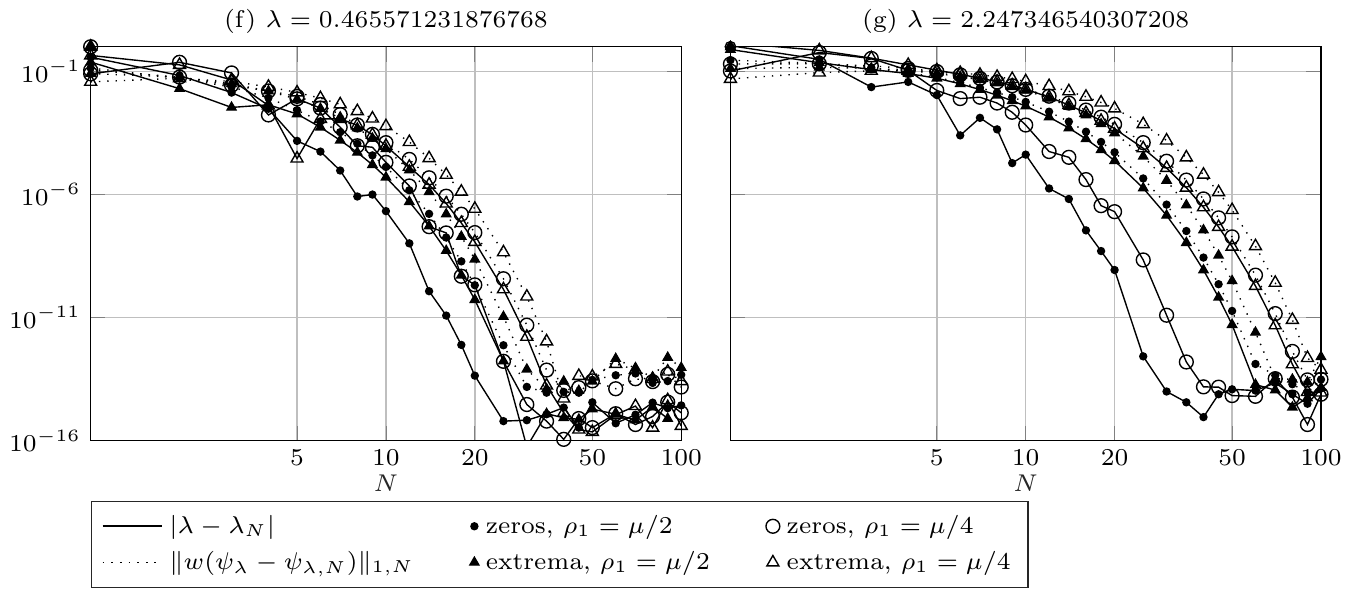}
\caption{
Linear RE \eqref{linear-RE}: 
log-log plot of $|\lambda-\lambda_N|$ (solid) and $\|\psi_\lambda-\psi_{\lambda,N}\|_{1,N}$ (dotted), for parameter values specified in \cref{t:linear-DE}.
}
\label{f:LinearRE_conv}
\end{figure}

\subsection{Numerical bifurcation analysis of nonlinear iDEs}
We now consider nonlinear iDEs and perform the numerical bifurcation analysis of equilibria using the numerical continuation package MatCont applied to the ODE system \eqref{ODE}. For each example below we specify the parameters used to construct the ODE system and perform the continuation. The bifurcation diagrams show the output of the numerical continuation.
Although Laguerre zeros appear to ensure better accuracy than Laguerre extrema in the linear examples (\cref{f:LinearDDE_conv} and \ref{f:LinearRE_conv}), numerical instabilities caused by the eigenvalues with large modulus 
seem to substantially affect the computations in the nonlinear case. For this reason we use Laguerre extrema and the corresponding Gauss--Radau--Laguerre quadrature rules.

Consider the nonlinear iDDE taken from \cite{BB16} 
\begin{equation} \label{beretta-breda}
\frac{\diff y}{\diff t}(t) = -\delta_A y(t) + b \int_0^{+\infty} f_{m/\tau}^{(m)} (s) \mathrm{e}^{-\delta_J s - a y(t-s)} y(t-s) \diff s, \quad t\geq 0,
\end{equation}
where $\tau,m,a,b,\delta_A,\delta_J>0$ and $f_{\mu}^{(m)} (s)$ is the Gamma distribution \eqref{Gamma-density}.
When $m \in \mathbb{N}$, \eqref{beretta-breda} admits an equivalent ODE representation via the linear chain trick \cite{GSV18}, which we use as reference to compare the bifurcation diagrams computed via the PSD.
If $m$ is large enough, the positive equilibrium undergoes two Hopf bifurcations when varying $\tau$.
The Hopf bifurcation curve in $(\tau,m)$, which separates the parts of the plane where the positive equilibrium is either stable or unstable, is shown in \cref{f:beretta-breda} (top-left panel), showing also that the curves computed for $N=10$ and $N=20$ are indistinguishable in the figure.
Note that, since $y$ appears as an exponent in \eqref{beretta-breda}, to obtain reliable results we used the positive part of the state vector when computing the right-hand side of the PSD.
\cref{f:beretta-breda} shows the convergence of the observed error in the approximation of the two Hopf bifurcation points and the corresponding eigenvalues for $m=6.5,\ 7,\ 7.5$. 
Note that the discretization approach allows to consider distributions with non integer shape parameter $m$, offering a very general approach for iDDEs and iREs.

\begin{figure}[t]
\centering
\includegraphics[width=\textwidth]{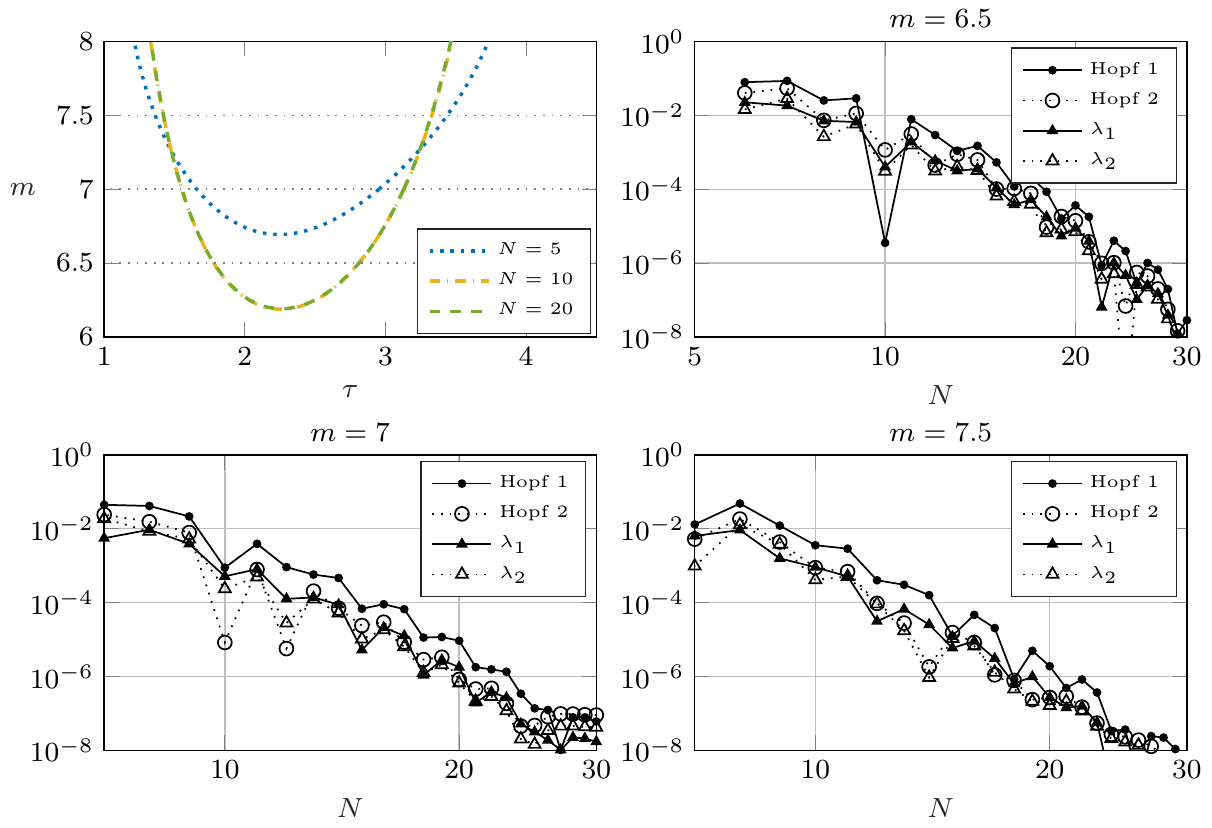}
\caption{
Top-left: Hopf bifurcation curve of \eqref{beretta-breda} in $(\tau,m)$ computed with MatCont, for $\delta_A=0.5,\,\delta_J=1,\,a=7,\,b=350$, and $\rho=(\delta_J+m/\tau)/4$, for $N=5,10,20$ (stable positive equilibrium below the curve). Note that the curves for $N=10,\,20$ are indistinguishable. 
The other panels show the error in the two Hopf bifurcation points and the critical characteristic root at Hopf, for $m=6.5,\,7,\,7.5$, with MatCont tolerance $10^{-10}$. Reference Hopf values are computed from the equivalent ODE formulation \cite{GSV18}.
}
\label{f:beretta-breda}
\end{figure}




\begin{figure}[t]
\centering
\includegraphics[width=\textwidth]{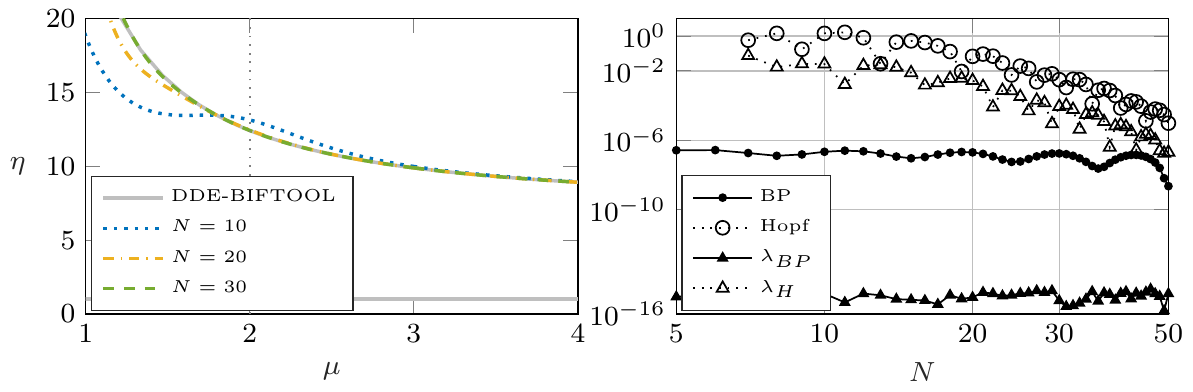}
\caption{
Left: Hopf bifurcation curve of \eqref{Nich-RE} in $(\mu,\gamma/\mu)$, for $\eta:=\beta_0 \mathrm{e}^{-\mu}/\mu$, computed using MatCont tolerance and $10^{-6}$, $\rho=\mu/2$. The positive equilibrium exists above the line $\eta=1$ (`branching point', BP). Gray lines are computed with DDE-BIFTOOL on the equivalent DDE formulation.
Right: error in BP and Hopf for $\mu=2$, with MatCont tolerance $10^{-10}$. Reference values are computed with $N=50$.
}
\label{f:blowflies}
\end{figure}

Consider now a nonlinear iRE equivalent to the Nicholson's blowflies DDE \cite{Gurney1980}
\begin{equation}\label{Nich-RE}
y(t) = \beta_0 \, \mathrm{e}^{-\int_1^{\infty} y(t-s) \mathrm{e}^{-\mu s} \diff s} \, \int_1^{\infty} y(t-s) \mathrm{e}^{-\mu s} \diff s, \quad t>0,
\end{equation}
for $\beta_0,\mu \geq 0$, where $y(t)$ represents the total birth rate of a population where individuals become adult after a maturation delay which is normalized to 1.
Equation \eqref{Nich-RE} admits a unique nontrivial equilibrium $\overline{y} = (\log \frac{\beta_0}{\mu} - \mu) \mu \mathrm{e}^{\mu}$ for $\beta_0 > \mu \mathrm{e}^{\mu}$, which undergoes a Hopf bifurcation as $\beta_0$ increases, and, if $\mu$ is large enough, the periodic solution undergoes a period doubling bifurcation \cite{DeWolff21}. 
The stability regions of the positive equilibrium in a two-parameter plane are shown in \cref{f:blowflies} (left), compared to those obtained by applying DDE-BIFTOOL to the equivalent DDE with finite delay (for the equivalence see \cite{DeWolff21}). The numerical error in the approximation of the transcritical and Hopf bifurcations with respect to $\beta_0$ are illustrated in \cref{f:blowflies} (right), for $\mu=4$. 
For implementation purposes, the right-hand side of \eqref{Nich-RE} was reformulated integrating by parts, and then computed with respect to the integrated state $Y(t) := \int_0^t y(s)\diff s$.
The quadrature rules on the interval $[1,+\infty)$ were obtained by appropriately shifting the standard Gauss--Radau--Laguerre quadrature nodes. 
We note that the eigenvalue at the branching point ($\lambda=0$) is approximated to machine precision already with $N=1$, consistently with the analysis for linear equations. The approximation of the bifurcation point loses some accuracy probably due to the nonlinear solver used by MatCont for evaluating the test functions. The eigenvalues at the Hopf bifurcation point and the bifurcation point itself show a slower convergence, consistently with the fact that, at the Hopf bifurcation point, the eigenvalues are imaginary and therefore have strictly positive modulus.

\section{Conclusions}\label{S_Concluding remarks and outlook}
In this paper we have introduced a general abstract framework for the pseudospectral reduction of nonlinear iDEs to a finite-dimensional system of ODEs, which can be studied with well-established software for ODEs (specifically we used MatCont for MATLAB).
For iREs we have proposed a novel approach compared to \cite{GSV18}, exploiting the equivalence via integration proposed for finite delay in \cite{SDV21}. In this way, the PSD returns directly an ODE system without involving an implicit algebraic condition.

For linear(ized) iDDEs and iREs we have proved the convergence of the PSD approximation of the characteristic roots to the true ones, answering an important question raised when the method was first proposed in \cite{GSV18}. The convergence holds for a general weighted $L^p$-norm, therefore also in the Hilbert space $L^2_\rho$. The numerical tests illustrate the theoretical result of convergence of characteristic roots and eigenfunctions in norm $\|\cdot\|_{\infty,\rho}$ for iDDEs and $\|\cdot\|_{1,\rho}$ for iREs. 
The theoretical error bound depends on the modulus, the real part, and the multiplicity of the target eigenvalue, as well as the discretization index $N$, the choice of the family of collocation nodes (chosen as the zeros or extrema of the scaled Laguerre polynomials, orthogonal with respect to a weight $\mathrm{e}^{\rho_1 \cdot}$, $\rho_1<\rho$) and the quadrature formula used to approximate the integrals. 
In particular, approximating eigenvalues with larger modulus with a given accuracy requires larger discretization indices. 
Despite this, since the right half-plane $\mathbb{C}_\rho$ contains a finite number of eigenvalues, our theoretical result guarantees that all eigenvalues in $\mathbb{C}_\rho$ can be approximated with desired accuracy by choosing $N$ large enough.
However, determining such value of $N$ is a difficult problem, unless one can estimate \emph{a priori} the location of the spectrum. 
To investigate this problem experimentally, we performed extensive numerical tests, which confirm that the observed accuracy strongly depends on the specific eigenvalue of interest and the chosen quadrature formula. For optimal choices, discretization indices of the order of 20--30 usually guarantee machine precision for eigenvalues with multiplicity one.
%

Numerical tests on nonlinear equations demonstrate the flexibility of the method to study the bifurcation of equilibria.
To verify the correctness of the computed bifurcation diagrams, in this paper we have considered equations in which the bifurcation diagram can be obtained via an alternative route, hence allowing to compare the PSD output with reference results: the nonlinear iDDE \eqref{beretta-breda} admits an equivalent ODE formulation, at least for integer shape parameters of the kernel, whose bifurcation properties can be studied independently with MatCont; and the iRE \eqref{Nich-RE} admits an equivalent formulation as a DDE with finite delay that can be studied with bifurcation software for DDEs (here we used DDE-BIFTOOL for MATLAB).

Future work includes the proof of convergence of the resolvent operators, that may be useful in turn to prove the convergence of the solution operators of the initial value problems and of the periodic solutions. 
From a computational point of view, we plan to investigate numerically the impact of different sets of collocation nodes and use truncated interpolation and quadrature schemes to improve computation times in the spirit of \cite{MM082}.
While the abstract formulation encompasses other types of models, the convergence proof in this paper relies on several assumptions specific to iDEs, including the form of the boundary condition defined by $\mathcal{B}$ and that the operator $\mathcal{F}$ maps on a $d$-dimensional space of functions $w\Xi_d$, on which weighted interpolation is exact. 
One could consider extending the techniques to more general boundary conditions in the form $\mathcal{B}x=0$. 
For more general basis functions of $\Xi_d$ or different (e.g., truncated) interpolation schemes, the analogous of \cref{Th00} would require the assumption that the interpolation scheme is convergent on the functions in $\Xi_d$. Similarly, the one-to-one correspondence of equilibria and linearization (\cref{th:one-to-one}) may hold only in an approximate sense, if interpolation is not exact on the equilibria.
Finally, the analysis conducted in the appendix also provides the exact location of the spectrum of the reduced differentiation matrix, i.e., the differentiation matrix with zero boundary condition, associated with the zeros and extrema of the standard Laguerre polynomials on the positive half-line. 
Understanding the behavior of the spectrum of differentiation matrices is a notoriously difficult problem, and these results add a first significative contribution to the study of such matrices, which are commonly used in the numerical solution of differential equations. We aim to perform further explorations in future work.

\appendix
\section{General results for a collocation problem on $\mathbb{R}_+$} \label{S_appendix}
In this section we assume $d=1$ for simplicity and, in order to work with the family of the standard Laguerre polynomials $\{L_N\}_{N \geq 0}$, orthogonal with respect to the weight $\mathrm{e}^{-t}$ on the positive half-line $\mathbb{R}_+:=[0,+\infty)$, we consider the equivalent reformulation of the collocation problem on $\mathbb{R}_+,$ and we study the existence and uniqueness and the convergence of the collocation polynomials for two relevant choices of the collocation points: the zeros and the extrema of $L_N$, for which we refer for instance to \cite[Chapter 2.3.5]{MastroianniBook}. The same approach can be easily extended to other collocation points.

Given $\beta, c \in \mathbb{C}$ and $\mu \in \mathbb{C}$ with  $\Re \mu < 1/2$, let $y(t) = \mathrm{e}^{\mu t} \beta+\int_0^t  \mathrm{e}^{\mu (t-s)} c \diff s$
 be the solution of the initial value problem 
\begin{equation}\label{ODE1}
\begin{cases} 
\frac{\diff y}{\diff t} = \mu \, y + c, \quad t \in \mathbb{R}_+,\\[5pt]
y(0)=\beta.
\end{cases}
\end{equation}

Note that, since $\Re \mu < 1/2$, for all $\delta \geq 0$ and $w_{\delta}(t):=\mathrm{e}^{-(1/2+\delta)t},\  t \in \mathbb{R}_+,$ we have  $w_{\delta} y \in L^p(\mathbb{R}_+,\mathbb{C})$ for $1 \leq p \leq +\infty.$ 

Let $\Pi_{N}$ denote the set of algebraic polynomials of degree at most $N$ on $\mathbb{R}_+$. Given a set of distinct points $\{ t_{i,N} \}_{i=1}^N$ on $\mathbb{R}_+,$ the collocation polynomial $p_{N} \in \Pi_{N}$ for \eqref{ODE1} is defined by 
\begin{equation}\label{ODEp}
\begin{cases}
\frac{\diff p_{N} }{\diff t} = \mu \,p_{N} + c, \quad  t \in \{ t_{i,N} \}_{i=1}^N,\\[5pt]
p_{N} (0)=\beta.
\end{cases}
\end{equation}

Since $\frac{\diff p_{N}}{\diff t}- \mu \,p_{N} - c$ is a polynomial of degree $N$ which vanishes at the collocation points, the collocation equation \eqref{ODEp} admits the following equivalent representation
\begin{equation}\label{ODEd}
\left\{\begin{array}{l}
\frac{\diff p_{N} }{\diff t} - \mu \, p_{N} = c + k_N q_N, \quad t \in \mathbb{R}_+,\\[5pt]
p_{N}(0)=\beta,
\end{array}
\right.
\end{equation}
where
$q_N(t)=\prod_{i=1}^N(t-t_{i,N})$ and $k_N$ is a suitable constant. By expressing the polynomials in \eqref{ODEd} as follows
\begin{equation*}
\frac{\diff p_{N} }{\diff t}(t)= \sum\limits_{j=0}^{N-1}a_j t^{j}, \quad  
p_{N}(t)=\beta+\sum\limits_{j=1}^{N}\frac{a_{j-1}}{j}\, t^{j}, \quad \text{and } \quad q_N(t)= \sum_{j=0}^N \frac{b_j}{j!} t^j,
\end{equation*}
from \eqref{ODEd} we derive the following recurrence relations
\begin{equation}\label{rec}
\left\{\begin{array}{l}
a_0=\mu \beta +c + k_N b_0\\
a_{j}-\frac{\mu}{j}a_{j-1}=k_N\frac{b_j}{j!} ,  \quad j=1,\ldots,N,\\
a_{N}=0.
\end{array}
\right.
\end{equation}
Now, by easy computations, we get from \eqref{rec} 
\begin{equation*} 
a_j= \frac{\mu^{j}}{j!}(\mu\beta +c) + k_Nd _{j}, \quad j=1,\ldots,N,
\end{equation*}
with $d_j$ given by 
\begin{equation}\label{dj}
d_j j!  = \sum\limits_{k=0}^{j} \mu^{j-k}  \, b_k, \qquad j=0,1,\dots, N.
\end{equation}
Finally, by requiring that $a_{N}=0,$ we get the following equation for $k_N$ 
\begin{equation}\label{kN}
\mu^{N}(\mu\beta +c) + k_Nd _{N} N!=0.
 \end{equation}
Note that each $d_j$ in \eqref{dj} depends on $\mu$ and, through the polynomial $q_N,$ on the collocation points. In the following we focus on two relevant choices: the zeros and the extrema of the standard Laguerre polynomial $L_N$. For both, we study the existence and uniqueness, and the convergence w.r.t. the weight function $w_\delta$ in $L^p(\mathbb{R}_+,\mathbb{C})$, namely the behavior of $\| w_\delta(p_{N}-y)\|_p$ as $N \to \infty.$ 

Let $\hat e_N:= w_\delta(p_N-y).$ From \eqref{ODE1} and \eqref{ODEd} we easily get 
\begin{equation}\label{eN}
 \hat e_{N}(t)=  k_N  w_\delta(t)  \int_0^t \mathrm{e}^{\mu(t-s)}q_N(s) \mathrm{d}s , \quad t \in \mathbb{R}_+.
\end{equation}

%

\begin{proposition}[Existence, uniqueness, convergence for Laguerre zeros]\label{pr:existence_zeros}
Let $\mu \in \mathbb{C}$, $\Re \mu < 1/2,$ and let $\{ t_{i,N} \}_{i=1}^N$ be the zeros of the $N$-degree standard Laguerre polynomial $L_N.$ Then there exists a unique polynomial solution $p_N$ of \eqref{ODEp} for all $N \geq 1.$ Moreover, defining 
\begin{equation}\label{Cl} 
C(\mu):=\frac{\mu}{\mu-1}, 
\end{equation}
we have 
\begin{equation}\label{kNqN-zeros}
k_N q_N(t)=-C(\mu)^N L_N(t) \, (\mu \beta+c), \quad t \in \mathbb{R}_+,
\end{equation}
and the following error bound 
 \begin{equation}\label{eNLX}
 \|\hat e_{N}\|_p \leq  |C(\mu)|^N  \frac{ (|\mu| |\beta| + |c|) K_{\delta}^p}{(1/2-\Re \mu)},
\end{equation}
where
\begin{equation}\label{Kpdelta}
K_{\delta}^p = 
\begin{cases}
	1, &p=\infty, \,   \delta \geq 0,\\
  	(p \delta)^{-1/p}, &1 \leq p <\infty, \, \delta > 0.
\end{cases}
\end{equation}
The convergence $\|\hat e_{N}\|_p \to 0$ as $N\to \infty$ follows for $p=\infty$ with $\delta \geq 0,$ and for $\ 1 \leq p <+\infty$  with $\delta > 0.$ 
\end{proposition}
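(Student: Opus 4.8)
The plan is to leverage the reduction already carried out in \eqref{ODEd}--\eqref{kN}, which shows that constructing $p_N$ amounts to solving the single scalar equation \eqref{kN} for the constant $k_N$: existence and uniqueness of $p_N$ hold precisely when $d_N \neq 0$. Since the collocation nodes are the zeros of $L_N$ and $q_N$ is monic of degree $N$, the first observation is that $q_N$ and $L_N$ differ only by the leading coefficient $(-1)^N/N!$ of $L_N$, so $q_N = (-1)^N N!\,L_N$. This lets me translate every quantity involving $q_N$ into a statement about the standard Laguerre polynomial, for which explicit transform formulas are available.

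The heart of the argument is the explicit evaluation of $d_N$ appearing in \eqref{dj} and \eqref{kN}. Reading off $b_k = q_N^{(k)}(0)$ from the Taylor expansion, I would rewrite $d_N N! = \sum_{k=0}^N \mu^{N-k}q_N^{(k)}(0)$ and recognize the telescoping identity
\[
\sum_{k=0}^N \frac{q_N^{(k)}(0)}{\mu^{k+1}} = \int_0^\infty \mathrm{e}^{-\mu s} q_N(s)\,\diff s,
\]
obtained by repeated integration by parts (valid first for $\Re\mu>0$, then for all $\mu\neq0$ by analytic continuation, both sides being rational in $\mu$). Substituting $q_N = (-1)^N N!\,L_N$ and invoking the classical Laplace transform of the Laguerre polynomial,
\[
\int_0^\infty \mathrm{e}^{-\mu s}L_N(s)\,\diff s = \frac{(\mu-1)^N}{\mu^{N+1}},
\]
which follows at once from its generating function, I obtain $d_N = (1-\mu)^N$. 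Since $\Re\mu<1/2$ forces $\mu\neq1$, this gives $d_N\neq0$, so \eqref{kN} has a unique solution and hence $p_N$ exists and is unique. Solving \eqref{kN} for $k_N$ and multiplying by $q_N$ then yields exactly \eqref{kNqN-zeros}, with $C(\mu)=\mu/(\mu-1)$ as defined in \eqref{Cl}.

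With \eqref{kNqN-zeros} in hand, I would insert it into the error representation \eqref{eN} to get
\[
\hat e_N(t) = -C(\mu)^N(\mu\beta+c)\,w_\delta(t)\int_0^t \mathrm{e}^{\mu(t-s)}L_N(s)\,\diff s.
\]
To bound the integral I would use the Szeg\H{o} inequality $|L_N(s)|\le \mathrm{e}^{s/2}$ for $s\ge0$, which yields $\bigl|\int_0^t \mathrm{e}^{\mu(t-s)}L_N(s)\,\diff s\bigr| \le \mathrm{e}^{t/2}/(1/2-\Re\mu)$ using $\Re\mu<1/2$. Since $w_\delta(t)\mathrm{e}^{t/2}=\mathrm{e}^{-\delta t}$, the pointwise estimate reduces to $|\hat e_N(t)| \le |C(\mu)|^N \frac{|\mu||\beta|+|c|}{1/2-\Re\mu}\,\mathrm{e}^{-\delta t}$; taking the $L^p$-norm of $\mathrm{e}^{-\delta t}$ produces the factor $K_\delta^p$ of \eqref{Kpdelta} and gives \eqref{eNLX}. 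Finally, $|C(\mu)|<1$ is equivalent to $\Re\mu<1/2$ (a one-line computation comparing $|\mu|^2$ with $|\mu-1|^2$), so $|C(\mu)|^N\to0$ delivers convergence in the stated cases.

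The main obstacle I anticipate is the explicit identification of $d_N$: everything downstream (existence, the clean formula \eqref{kNqN-zeros}, and the geometric decay rate $C(\mu)^N$) hinges on recognizing the Laplace-transform structure hidden in the recurrence \eqref{dj}--\eqref{kN}. The error bound itself is then routine modulo the classical Laguerre estimate $|L_N(s)|\le \mathrm{e}^{s/2}$, which is the only external ingredient.
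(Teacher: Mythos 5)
Your proposal is correct and follows essentially the same route as the paper's proof: reduce existence and uniqueness to $d_N\neq 0$, identify $k_N q_N$ explicitly, and bound the error via \eqref{eN} together with the classical estimate $\mathrm{e}^{-s/2}|L_N(s)|\leq 1$. The only difference is cosmetic: you evaluate $d_N=(1-\mu)^N$ through the Laplace transform of $L_N$ (via integration by parts and the generating function), whereas the paper reads off the coefficients $b_j$ from the explicit expansion of $L_N$ and applies the binomial theorem directly; both computations are valid and lead to the same conclusion.
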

\begin{proof}
For the standard Laguerre polynomial $L_N$ it holds \begin{equation*}
q_N(t)=(-1)^N N! \, L_N(t) = (-1)^N N! \, \sum\limits_{j=0}^N (-1)^j {N \choose j}  \,\frac{t^j}{j!},
\end{equation*}
see \cite[Chapter 2.3.5]{MastroianniBook}.
Hence, $b_j=(-1)^N N! (-1)^j {N\choose j}$, and, from \eqref{dj},
\begin{equation} \label{dN-zeros}
d_{N}
=(-1)^{N}\sum\limits_{j=0}^{N} \mu^{N-j}(-1)^{j} {N \choose j} 
= (1-\mu)^N \neq 0
\end{equation}
since $|1-\mu| \geq |1-\Re\mu| > \frac{1}{2}.$ Then the existence and uniqueness of the collocation solution follows for all $N \geq 1.$ From \eqref{kN} and \eqref{Cl} we obtain $k_N=-\frac{C(\mu)^N}{(-1)^N N!} (\mu \beta+c)$ and \eqref{kNqN-zeros}.

Let us study now the error $\hat{e}_N$. From \eqref{eN} and \eqref{kNqN-zeros} we get
\begin{equation*}
 \hat e_{N}(t)= C(\mu)^N   \, \mathrm{e}^{-\delta t}\int_0^t \mathrm{e}^{(\mu-1/2)(t-s)} \mathrm{e}^{-s/2} L_N(s) \,\mathrm{d}s \, (\mu \beta+c), \quad t \in \mathbb{R}_+.
\end{equation*}
Since $\mathrm{e}^{-s/2}| L_N(s)| \leq L_N(0)=1$ (see \cite[Section 2.3.5]{MastroianniBook}), we get   
\begin{align*}
 |\hat e_{N}(t)| 
 	&\leq   \, |C(\mu)|^N \mathrm{e}^{-\delta t} \ \frac{1-\mathrm{e}^{(\Re \mu-1/2) t}}{1/2 -\Re \mu} \,(|\mu| |\beta| + |c|) , \quad t \in \mathbb{R}_+.
 \end{align*}
The converge follows since $|C(\mu)|<1$ for $\Re \mu<1/2$ and from the bounds
\begin{equation}\label{pbound}
\begin{array}{ll}
\| \mathrm{e}^{-\delta t}(1-\mathrm{e}^{(\Re \mu-1/2)t})\|_\infty \leq 1, &\text { for all } \delta \geq 0,\\[5pt]
 \| \mathrm{e}^{-\delta t}(1-\mathrm{e}^{(\Re \mu-1/2)t})\|_p \leq (p\delta)^{-1/p},
 & \text { for all } \delta >0 \text{ and  } 1 \leq p <\infty.
 \end{array}
\end{equation}
\end{proof}

\begin{proposition}[Existence, uniqueness, convergence for Laguerre extrema] \label{pr:existence_extrema}
Let $\mu \in \mathbb{C}$, $\Re \mu < 1/2,$ and let $\{ t_{i,N} \}_{i=1}^N$ be the extrema of the $(N+1)$-degree standard Laguerre polynomial, i.e., the zeros of $\frac{\mathrm{d}}{\mathrm{d}t}L_{N+1}$. Then there exists a unique polynomial solution $p_N$ of \eqref{ODEp} for all $N \geq 1.$ Moreover, for $C(\mu)$ defined as in \eqref{Cl} and 
\begin{equation*}
D_N(\mu):= \frac{C(\mu)^{N}}{1-C(\mu)^{N+1} },
\end{equation*}
we have that 
\begin{equation}\label{kNqN-extrema}
k_N q_N(t)=   D_N(\mu) \, \frac{\diff}{\diff t} L_{N+1}(t) \left(\frac{\mu\beta+c}{\mu-1} \right), \ t \in \mathbb{R}_+,
\end{equation}
and the following bound 
\begin{equation}\label{eNRLX}
 \|\hat e_{N}\|_p \leq |D_N(\mu)| \frac{(|\mu| |\beta| + |c|) K_{\delta}^p }{|\mu-1|} \left(2+ \frac{|\mu|}{1/2-\Re\mu}\right)
 	\end{equation}
with $K_{\delta}^p$ defined in \eqref{Kpdelta}. 
The convergence $\|\hat e_{N}\|_p\to 0$ as $N\to \infty$ follows for $p=\infty$ with $\delta \geq 0,$ and for $\ 1 \leq p <+\infty$  with $\delta > 0.$ 
\end{proposition}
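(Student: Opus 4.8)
The plan is to mirror the proof of \cref{pr:existence_zeros}, replacing the Laguerre polynomial $L_N$ by the derivative $\frac{\diff}{\diff t}L_{N+1}$, whose zeros are precisely the prescribed collocation nodes. First I would identify the nodal polynomial: since $\frac{\diff}{\diff t}L_{N+1}$ has degree $N$ with leading coefficient $(-1)^{N+1}/N!$, the monic polynomial $q_N(t)=\prod_{i=1}^N(t-t_{i,N})$ equals $(-1)^{N+1}N!\,\frac{\diff}{\diff t}L_{N+1}(t)$. Differentiating the explicit expansion of $L_{N+1}$ then gives the coefficients $b_k$ in the representation $q_N(t)=\sum_{k=0}^N \frac{b_k}{k!}t^k$, namely $b_k=(-1)^{N+k}N!\binom{N+1}{k+1}$, the analogue of the coefficients used for the zeros.

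Next I would substitute these $b_k$ into the formula \eqref{dj} for $d_N$ and evaluate the resulting binomial sum. Shifting the summation index and recognizing a binomial expansion yields the closed form $d_N=(-1)^{N+1}\bigl[(\mu-1)^{N+1}-\mu^{N+1}\bigr]$, the counterpart of \eqref{dN-zeros}. The nonvanishing of $d_N$, hence existence and uniqueness of $p_N$ through \eqref{kN}, follows from $\Re\mu<1/2$, which is exactly the condition $|\mu-1|>|\mu|$; thus $|(\mu-1)^{N+1}|>|\mu^{N+1}|$ and the two terms cannot cancel. Solving \eqref{kN} for $k_N$ and simplifying (factoring $(\mu-1)^{N+1}$ out of $d_N$ to produce the factor $1-C(\mu)^{N+1}$, with $C(\mu)$ as in \eqref{Cl}) then yields the closed form \eqref{kNqN-extrema} for $k_N q_N$; only its modulus will be needed for the error estimate.

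The heart of the argument is the error bound. Inserting \eqref{kNqN-extrema} into the representation \eqref{eN} of $\hat e_N$ produces the integral $\int_0^t \mathrm{e}^{\mu(t-s)}\frac{\diff}{\diff t}L_{N+1}(s)\,\diff s$. The main novelty compared with the zeros case, and the step I expect to be the chief obstacle, is that I cannot directly invoke the pointwise bound $\mathrm{e}^{-s/2}|L_{N+1}(s)|\le 1$: the integrand involves the derivative $\frac{\diff}{\diff t}L_{N+1}$, for which no such uniform weighted bound is available. The remedy is to integrate by parts, trading $\frac{\diff}{\diff t}L_{N+1}$ for $L_{N+1}$ at the cost of the boundary terms $L_{N+1}(t)$ and $\mathrm{e}^{\mu t}L_{N+1}(0)=\mathrm{e}^{\mu t}$, together with a remaining integral $\mu\int_0^t \mathrm{e}^{\mu(t-s)}L_{N+1}(s)\,\diff s$. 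Each of these three terms, once multiplied by $w_\delta(t)=\mathrm{e}^{-(1/2+\delta)t}$, is then controlled using $\mathrm{e}^{-s/2}|L_{N+1}(s)|\le1$: the two boundary contributions are each bounded by $\mathrm{e}^{-\delta t}$ (here $\Re\mu<1/2$ ensures $\mathrm{e}^{-(1/2-\Re\mu+\delta)t}\le\mathrm{e}^{-\delta t}$), while the integral term contributes $\mathrm{e}^{-\delta t}\,|\mu|/(1/2-\Re\mu)$, so that their sum produces exactly the factor $2+|\mu|/(1/2-\Re\mu)$ appearing in \eqref{eNRLX}.

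Finally I would pass to the $L^p(\mathbb{R}_+)$ norm. All three contributions carry the common envelope $\mathrm{e}^{-\delta t}$, whose $p$-norm equals $K_\delta^p$ from \eqref{Kpdelta}, namely $1$ for $p=\infty,\,\delta\ge0$ and $(p\delta)^{-1/p}$ for $1\le p<\infty,\,\delta>0$, just as in \eqref{pbound}. Collecting the prefactors $|D_N(\mu)|$, $(|\mu||\beta|+|c|)/|\mu-1|$, and the geometric factor gives \eqref{eNRLX}. Convergence $\|\hat e_N\|_p\to0$ then follows because $|C(\mu)|<1$ (equivalent to $\Re\mu<1/2$), which forces $C(\mu)^N\to0$ and hence $D_N(\mu)=C(\mu)^N/(1-C(\mu)^{N+1})\to0$ whenever $K_\delta^p$ is finite.
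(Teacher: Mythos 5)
Your proposal is correct and follows essentially the same route as the paper's proof: the same identification of the nodal polynomial (the paper writes it via the generalized Laguerre polynomial $L_N^{(1)}=-\frac{\diff}{\diff t}L_{N+1}$, a purely cosmetic difference), the same binomial evaluation of $d_N$ yielding $(-1)^{N+1}\bigl[(\mu-1)^{N+1}-\mu^{N+1}\bigr]$ with nonvanishing from $|\mu-1|>|\mu|$, and crucially the same integration by parts to reduce the error integral to terms controlled by $\mathrm{e}^{-s/2}|L_{N+1}(s)|\le 1$, producing the factor $2+|\mu|/(1/2-\Re\mu)$. No gaps.
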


\begin{proof}
For the polynomial $\frac{\mathrm{d}}{\mathrm{d}t}L_{N+1}$ it holds
\begin{equation*}
q_N(t) =(-1)^{N} N! \, L_N^{(1)}(t)  
	= (-1)^{N} N! \sum\limits_{j=0}^{N} \frac{(-1)^j}{j!} 
{ N+1\choose N-j} \, t^{j}
\end{equation*} 
(see \cite[Chapter 2.3.5]{MastroianniBook}). Hence $b_j = (-1)^{N} N! (-1)^{j} { N+1\choose j+1}$, from \eqref{dj} we get
\begin{align} \label{dN-extrema}
d_N
= (-1)^{N+1} \mu^{N+1} \sum_{j=0}^{N} (-\mu)^{-(j+1)} { N+1\choose j+1}
= (-1)^{N+1} \left( (\mu-1)^{N+1} - \mu^{N+1} \right) \neq 0
\end{align}
since $\Re \mu < 1/2.$ 
Then the existence and uniqueness of the collocation solution follows for all $N \geq 1.$ 
From \eqref{kN} and from
$$ \frac{\mu^N}{(\mu-1)^{N+1}-\mu^{N+1}} = \frac{C(\mu)^N}{(\mu-1) - \mu C(\mu)^{N} } = \frac{C(\mu)^N}{(\mu-1)(1 - C(\mu)^{N+1}) },$$
we obtain
$$k_N=\frac{\mu^N (\mu\beta+c)}{d_N N!} 
= \frac{(-1)^{N+1}}{N!} \left[ \frac{C(\mu)^{N+1}}{1-C(\mu)^{N+1}} \beta + \frac{C(\mu)^{N}}{1-C(\mu)^{N+1}} \frac{c}{\mu-1} \right]$$
and \eqref{kNqN-extrema}. 
Let us now consider the error $\hat{e}_N$. From \eqref{eN} and \eqref{kNqN-extrema} we get
\begin{align*}
 \hat e_{N}(t)&=D_N(\mu)\, w_\delta(t) \int_0^t \mathrm{e}^{\mu(t-s)} \frac{\diff}{\diff s} L_{N+1}(s) \diff s \, \frac{\mu\beta+c}{\mu-1} \\
  &= D_N(\mu) \, w_\delta(t) \Big(L_{N+1}(t)-L_{N+1}(0) \mathrm{e}^{\mu t}+ \mu \int_0^t \mathrm{e}^{\mu(t-s)} L_{N+1}(s) \diff s \Big) \, \frac{\mu\beta+c}{\mu-1}
\end{align*}
Since $\mathrm{e}^{-s/2} | L_{N+1}(s) |\leq L_{N+1}(0)=1$ (see \cite[Chapter 2.3.5]{MastroianniBook}), we get   
\begin{equation*}
|\hat e_{N}(t)| \leq  |D_N(\mu)| \mathrm{e}^{-\delta t} \left( 2 + |\mu| \frac{1-\mathrm{e}^{(\Re \mu-1/2) t}}{1/2 -\Re \mu} \right) \left(\frac{|\beta||\mu|+|c|}{|\mu-1|}\right)
\end{equation*}
and \eqref{eNRLX} from \eqref{pbound}. 
Since $|D_N(\mu)| \to 0$ as $N \to \infty$, the convergence follows.
\end{proof}
As a final important remark, note that from \eqref{kN} with $\beta=0$ and $c=0$ we get the equation $d_N=0$, whose roots $\mu \in \mathbb{C}$ are the eigenvalues of the reduced differentiation matrix (with zero boundary condition). In particular, for Laguerre zeros, the differentiation matrix has only the eigenvalue $\mu=1$ with multiplicity $N$ (see \eqref{dN-zeros}), while for Laguerre extrema, the $N$ eigenvalues are aligned along $\Re \mu = \frac{1}{2}$ (see \eqref{dN-extrema}). This result adds a new insight into the spectral properties of such matrices, which are very useful in the numerical solution of differential equations.

\section*{Acknowledgments}
The authors are members of CDLab (Computational Dynamics Laboratory, Department of Mathematics and Computer Science, University of Udine), of INdAM Research group GNCS, and of UMI Research group ``Modellistica socio-epidemiologica''. 
F.S.~acknowledges the QJMAM Fund for Applied Mathematics for travel funding that facilitated the completion of this paper. 


\end{document}